\def\ve{\varepsilon}
\def\leq{\leqslant}
\def\geq{\geqslant}
\def\*#1{\mathbf{#1}}
\def\onetwelve{{\textstyle\frac{1}{12}}}
\def\half{{\textstyle\frac12}}
\DeclareMathOperator{\sgn}{sgn}
\newtheorem{theorem}{\indent Theorem}
\newtheorem{lemma}{\indent Lemma}
\newtheorem{remark}{\indent Remark}
\newtheorem{corollary}{\indent Corollary}
\begin{document}
\begin{center}
{\Large\textbf{On compact 4th order finite-difference schemes for the wave equation}}
\end{center}
\begin{center}
\medskip\par\noindent{\textbf{Alexander Zlotnik}${}^{a}$
\footnote{\small Corresponding author.\\
 E-mail addresses: \text{azlotnik@hse.ru} (A. Zlotnik), \text{kireevaoi@rgsu.net} (O. Kireeva)},
\textbf{Olga Kireeva}${}^{b}$}
\medskip\par\noindent$^{a}$
{\small National Research University Higher School of Economics,\\ 109028 Pokrovskii bd. 11, Moscow, Russia
\par\noindent$^b$ {\small Russian State Social University, W. Pieck 4, 129226 Moscow, Russia}}
\end{center}
\begin{abstract}
\noindent We consider compact finite-difference schemes of the 4th approximation order for an initial-boundary value problem (IBVP) for the $n$-dimensional non-homogeneous wave equation, $n\geq 1$.
Their construction is accomplished by both the classical Numerov approach and alternative technique based on averaging of the equation, together with further necessary improvements of the arising scheme for $n\geq 2$.
The alternative technique is applicable to other types of PDEs including parabolic and time-dependent Schr\"{o}dinger ones.
The schemes are implicit and three-point in each spatial direction and time and include a scheme with a splitting operator for $n\geq 2$.
For $n=1$ and the mesh on characteristics, the 4th order scheme becomes explicit and close to an exact four-point scheme.
We present a conditional stability theorem covering the cases of stability in strong and weak energy norms with respect to both initial functions and free term in the equation.
Its corollary ensures the 4th order error bound in the case of smooth solutions to the IBVP.
The main schemes are generalized for non-uniform rectangular meshes.
We also give results of numerical experiments showing the sensitive dependence of the error orders in three norms
on the weak smoothness order of the initial functions and free term and essential advantages over the 2nd approximation order schemes in the non-smooth case as well.
 \end{abstract}
\par Keywords: wave equation, compact higher-order finite-difference schemes, stability, practical error analysis, non-smooth data.

\section{Introduction}
\label{s:1}

Compact higher-order finite-difference schemes for PDEs is a popular subject and a vast literature is devoted to them.
The case of such type schemes for the wave equation have recently attracted a lot of interest, in particular, see  \cite{BTT18,HLZ19,LLL19,STT19}, where much more related references can be found.

We consider compact finite-difference schemes of the 4th approximation order for an initial-boundary value problem (IBVP) for the $n$-dimensional wave equation with constant coefficients, $n\geq 1$.
Their construction on uniform meshes is accomplished by both the classical Numerov approach and alternative technique based on averaging of the equation related to the polylinear finite element method (FEM), together with further necessary improvements of the arising scheme for $n\geq 2$.
This alternative technique is applicable to other types of PDEs including parabolic and time-dependent Schr\"{o}dinger equations (TDSE).
The constructed schemes are implicit and three-point in each spatial direction and time.
For $n\geq 2$, there is a scheme with a splitting operator among them.
Notice that we use implicit approximations for the second initial condition in the spirit of the approximations for the equation.
Curiously, for $n=1$ and the mesh on characteristics of the equation, the 4th order scheme becomes explicit and very close to an exact scheme on a four-point stencil.
\par We present a conditional stability theorem covering the cases of stability in strong (standard) and weak energy norms with respect to both initial functions and free term in the equation.
Its corollary rigorously ensures the 4th order error bound in the case of smooth solutions to the IBVP.
Note that stability is unconditional for similar compact schemes on uniform meshes for other type PDEs, for example, see
\cite{DZR15,S77}.
Our approach is applied in a unified manner for any $n\geq 1$ (not separately for $n=1$, 2 or 3 as in many papers),
the uniform rectangular (not only square) mesh is taken,
the stability results are of standard kind in the theory of finite-difference schemes and proved by the energy techniques (not only by getting bounds for harmonics of the numerical solution as in most papers).
In particular, the last point allows us to prove rigorously the 4th order error estimate in the strong energy norm for smooth solutions.
\par Moreover, enlarging of most schemes to the case of the wave equation with the variable coefficient $\rho(x)$ in front of $\partial_t^2u$ is simple, and there exists some connection to \cite{BTT18,STT19}.
Also the main schemes are rather easily generalized for non-uniform rectangular meshes in space and time; we apply averaging technique to both aims.
Concerning compact schemes on non-uniform meshes for other (1D in space) equations, in particular, see \cite{ChS18,JIS84,Z15,ZC18}.

\par In our 1D numerical experiments, we first concentrate on demonstrating the sensitive dependence of the error orders in the mesh $L^2$, uniform and strong energy norms on the weak smoothness order of the both initial functions and the weak dominating mixed smoothness order of the free term.
The cases of the delta-shaped, discontinuious or with discontinuos derivatives data are covered.
The higher-order practical error behavior is shown compared to standard 2nd approximation order schemes \cite{Z94,ZKMMA2018} thus confirming the essential advantages of 4th order schemes over them in the non-smooth case as well.
Second, we present numerical results in the case of non-uniform spatial meshes with various node distribution functions (for the smooth data).

The paper is organized as follows.
Auxiliary Section \ref{general3level} contains results on stability of general symmetric three-level method with a weight for hyperbolic equations in the strong and weak energy norms that we need to apply.
The main Section \ref{numerovschemes} is devoted to construction and analysis of the compact 4th order finite-difference schemes.
In Section \ref{nonunif_mesh}, the main compact schemes are generalized to the case of non-uniform rectangular meshes.
The results of these sections have been received by A. Zlotnik.
Section \ref{numerexperiments} contains results of numerical experiments have been accomplished by O. Kireeva.

\section{General symmetric three-level method for second order hyperbolic equations and its stability theorem}
\label{general3level}
\setcounter{equation}{0}
\setcounter{lemma}{0}
\setcounter{theorem}{0}

\par Let $H_h$ be a family of Euclidean spaces endowed with an inner product $(\cdot,\cdot)_h$ and the corresponding norm $\|\cdot\|_h$, where $h$ is the parameter (related to a spatial discretization).
Let linear operators $B_h$ and $A_h$ act in $H_h$ and have the properties $B_h=B_h^*>0$ and $A_h=A_h^*>0$.
Define the norms $\|w\|_{B_h}=(B_hw,w)_h^{1/2}$ and $\|w\|_{A_h}=(A_hw,w)_h^{1/2}$ in $H_h$ generated by them.
\par We assume that they are related by the following inequality
\begin{gather}
\|w\|_{A_h}\leq\alpha_h\|w\|_{B_h}\ \ \ \forall w\in H_h \ \ \Leftrightarrow\ \ A_h\leq\alpha_h^2B_h.
\label{ahbh}
\end{gather}
For methods of numerical solving 2nd order elliptic equations, usually $\alpha_h=c_0/h_{\min}$, where $h_{\min}$ is a minimal size of the spatial discretization.
\par We introduce the uniform mesh $\overline\omega_{h_t}=\{t_m=mh_t\}_{m=0}^M$
on a segment $[0,T]$, with the step $h_t=T/M>0$ and $M\geq 2$.
Let $\omega_{h_t}=\{t_m\}_{m=1}^{M-1}$.
We introduce the mesh averages and difference operators
\[
 \bar{s}_ty=\frac{\check{y}+y}{2},\,\
 s_ty=\frac{y+\hat{y}}{2},\,\
 \bar{\delta}_ty=\frac{y-\check{y}}{h_t},\,\
 \delta_ty=\frac{\hat{y}-y}{h_t},\,\
 \mathring{\delta}_ty=\frac{\hat{y}-\check{y}}{2h_t}
\]
and $\Lambda_ty=\delta_t\bar{\delta}_ty=\frac{\hat{y}-2y+\check{y}}{h_t^2}$
with $y^m=y(t_m)$, $\check{y}^{m}=y^{m-1}$ and $\hat{y}^{m}=y^{m+1}$,
as well as the summation operator with the variable upper limit
$I_{h_t}^my=h_t\sum_{l=1}^m y^l$ for $1\leq m\leq M$ and $I_{h_t}^0y=0$.

\par We consider a general symmetric three-level in $t$ method with a weight $\sigma$:
\begin{gather}
\big(B_h+\sigma h_t^2A_h\big)\Lambda_tv+A_hv=f\ \ \text{in}\ \ H_h\ \ \text{on}\ \ \omega_{h_t},
\label{3level sch1}\\[1mm]
 \big(B_h+\sigma h_t^2A_h\big)\delta_tv^0+\half h_tA_hv^0=u_1+\half h_tf^0\ \ \text{in}\ \ H_h
\label{3level sch2}
\end{gather}
where $v$: $\overline\omega_{h_t}\to H_h$ is the sought function and the functions $v^0,u_1\in H_h$ and $f$: $\{t_m\}_{m=0}^{M-1}\to H_h$ are given; we omit their dependence on $h$ for brevity.
Note that the parameter $\sigma$ can depend on $\*h:=(h,h_t)$.
Recall that linear algebraic systems with one and the same operator $B_h+\sigma h_t^2A_h$ has to be solved at time levels $t_m$
to find the solution $v^{m+1}$, $0\leq m\leq M-1$.
Note that \eqref{3level sch2} can be rewritten in the form closer to \eqref{3level sch1}:
$\frac{\big(B_h+\sigma h_t^2A_h\big)\delta_tv^0-u_1}{0.5h_t}+A_hv^0=f^0$.
\par Let the following conditions related to $\sigma$ hold: either $\sigma\geq\frac14$ and $\ve_0=1$, or
\begin{gather}
 \sigma<\tfrac14,\ \
 (\tfrac14-\sigma)h_t^2\alpha_h^2\leq 1-\ve_0^2\ \
 \text{for some}\ \ 0<\ve_0<1.
\label{stabcond}
\end{gather}
Then one can introduce the following $\sigma$- and $h_t$-dependent norm in $H_h$ and bound it from below:
\begin{equation}
\ve_0\|w\|_{B_h}\leq\|w\|_{0,\*h}:=\big[\|w\|_{B_h}^2+(\sigma-\tfrac14)h_t^2\|w\|_{A_h}^2\big]^{1/2}\ \
\forall w\in H_h.
\label{ve_0ineq}
\end{equation}
Obviously, for $\sigma\leq\tfrac14$, one also has $\|w\|_{0,\*h}\leq\|w\|_{B_h}$, and then the norms $\|\cdot\|_{0,\*h}$ and $\|\cdot\|_{B_h}$ are equivalent uniformly in $\*h$.

\par We present the stability theorem for method \eqref{3level sch1}-\eqref{3level sch2} with respect to the initial data $v^0$ and $u_1$ and the free term $f$ in the strong (standard) and weak energy mesh norms.

Define the norm
$\|y\|_{L_{h_t}^1(H_h)}=\tfrac14 h_t\|y^0\|_h+I_{h_t}^{M-1}\|y\|_h$ for $y$: $\{t_m\}_{m=0}^{M-1}\to H_h$.
\begin{theorem}
\label{theo:1}
For the solution to method \eqref{3level sch1}-\eqref{3level sch2}, the following bounds hold:
\par (1) in the strong energy norm
\begin{gather}
\max_{1\leq m\leq M}
\big[\|\bar{\delta}_tv^m\|_{B_h}^2+(\sigma-\tfrac14)h_t^2\|\bar{\delta}_tv^m\|_{A_h}^2
+\|\bar{s}_tv^m\|_{A_h}^2\big]^{1/2}
\nonumber\\[1mm]
\leq\big(\|v^0\|_{A_h}^2+\ve_0^{-2}\|B_h^{-1/2}u_1\|_h^2\big)^{1/2}
+2\ve_0^{-1}\|B_h^{-1/2}f\|_{L_{h_t}^1(H_h)};
\label{energy est1}
\end{gather}
one can replace the $f$-term
with
$2I_{h_t}^{M-1}\|A_h^{-1/2}\bar{\delta}_tf\|_h+3\max\limits_{0\leq m\leq M-1}\|A_h^{-1/2}f^m\|_h$;

\par (2) in the weak energy norm
\begin{gather}
\max_{0\leq m\leq M}
\max\Big\{\big[\|v^m\|_{B_h}^2+(\sigma-\tfrac14)h_t^2\|v^m\|_{A_h}^2\big]^{1/2},\,\|I_{h_t}^m\bar{s}_tv\|_{A_h}\Big\}
\nonumber\\[1mm]
 \leq\big[\|v^0\|_{B_h}^2+(\sigma-\tfrac14)h_t^2\|v^0\|_{A_h}^2\big]^{1/2}
 +2\|A_h^{-1/2}u_1\|_h
 +2\|A_h^{-1/2}f\|_{L_{h_t}^1(H_h)}.
\label{energy est2}
\end{gather}
For $f=\delta_tg$, one can replace $2\|A_h^{-1/2}f\|_{L_{h_t}^1(H_h)}$ with $\tfrac{2}{\ve_0}I_{h_t}^{M}\|B_{h}^{-1/2}\big(g-s_tg^0\big)\|_h$.
\end{theorem}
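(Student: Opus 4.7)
The proof pivots on the algebraic identity $v^m = \tfrac12(s_tv^m + \bar s_tv^m) - \tfrac{h_t^2}{4}\Lambda_tv^m$, which rewrites \eqref{3level sch1} as $\tilde B_h\Lambda_tv + \tfrac12 A_h(s_tv + \bar s_tv) = f$ with $\tilde B_h := B_h + (\sigma - \tfrac14)h_t^2A_h$. Condition \eqref{stabcond} is precisely what makes $\tilde B_h \geq \ve_0^2 B_h > 0$, so the $\tilde B_h$-norm is equivalent to the $B_h$-norm with constant $\ve_0^{-1}$. Similarly \eqref{3level sch2} becomes $\tilde B_h\delta_tv^0 + \tfrac{h_t}{2}A_hs_tv^0 = u_1 + \tfrac{h_t}{2}f^0$, and this reformulation makes the energy functionals in \eqref{energy est1} and \eqref{energy est2} transparent.

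For part (1), I test the rewritten scheme with $2h_t\mathring\delta_tv^m$ on $\omega_{h_t}$. Using the identities $h_t\Lambda_tv^m = \bar\delta_tv^{m+1} - \bar\delta_tv^m$, $2h_t\mathring\delta_tv^m = h_t(\bar\delta_tv^{m+1} + \bar\delta_tv^m)$, and $h_t\mathring\delta_tv^m = \bar s_tv^{m+1} - \bar s_tv^m$ (the last using $s_tv^m = \bar s_tv^{m+1}$), both contributions telescope to yield the discrete conservation identity $\mathcal E^{m+1} - \mathcal E^m = 2h_t(f^m, \mathring\delta_tv^m)$, where $\mathcal E^m := \|\bar\delta_tv^m\|_{\tilde B_h}^2 + \|\bar s_tv^m\|_{A_h}^2$. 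I control $\mathcal E^1$ by testing \eqref{3level sch2} with $\delta_tv^0$: combining $(A_hs_tv^0, \delta_tv^0) = \tfrac1{2h_t}(\|v^1\|_{A_h}^2 - \|v^0\|_{A_h}^2)$ with the elementary identity $\tfrac14(\|v^1\|_{A_h}^2 - \|v^0\|_{A_h}^2) = \|s_tv^0\|_{A_h}^2 - (A_hv^0, s_tv^0)$ gives $\mathcal E^1 = (A_hv^0, s_tv^0) + (u_1, \delta_tv^0) + \tfrac{h_t}{2}(f^0, \delta_tv^0)$, and a paired Cauchy--Schwarz then yields $\sqrt{\mathcal E^1} \leq \bigl(\|v^0\|_{A_h}^2 + \ve_0^{-2}\|B_h^{-1/2}u_1\|_h^2\bigr)^{1/2} + \tfrac{h_t}{2\ve_0}\|B_h^{-1/2}f^0\|_h$.

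Summing the conservation identity from $m=1$ to $k$, bounding the $f$-sum via $\|\mathring\delta_tv^m\|_{B_h} \leq \ve_0^{-1}\Phi$ with $\Phi := \max_m\sqrt{\mathcal E^m}$, and solving the resulting quadratic inequality $\Phi^2 \leq \mathcal E^1 + 2\ve_0^{-1}\Phi\cdot I_{h_t}^{M-1}\|B_h^{-1/2}f\|_h$ (via the elementary $x^2 - cx - a^2 \leq 0 \Rightarrow x \leq a + c$ for $a,c \geq 0$) yields \eqref{energy est1}; the $\tfrac{h_t}{4}\|f^0\|_h$ coefficient in $\|\cdot\|_{L^1_{h_t}}$ emerges precisely from the initial-step contribution. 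The alternative $f$-bound comes from the Abel summation $2\sum_{m=1}^k(f^m, \bar s_tv^{m+1} - \bar s_tv^m) = 2(f^k, \bar s_tv^{k+1}) - 2(f^1, \bar s_tv^1) - 2h_t\sum_{l=2}^k(\bar\delta_tf^l, \bar s_tv^l)$, bounded term-by-term in $\|A_h^{-1/2}\cdot\|_h$ using $\|\bar s_tv^m\|_{A_h} \leq \Phi$.

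For part (2), I first integrate the scheme in time. Applying $I_{h_t}^m$ to \eqref{3level sch1}, substituting from \eqref{3level sch2} to eliminate $\delta_tv^0$, and using $v^m = \bar s_tv^{m+1} - \tfrac{h_t}{2}\bar\delta_tv^{m+1}$, I derive the first-order identity $\tilde B_h\bar\delta_tv^{m+1} + A_h\bigl[I_{h_t}^m\bar s_tv + \tfrac{h_t}{2}\bar s_tv^{m+1}\bigr] = u_1 + \tfrac{h_t}{2}f^0 + I_{h_t}^mf$, valid for $m \geq 0$. Testing with $2h_t\bar s_tv^{m+1}$ and applying $2(A_hy, y-x) = \|y\|_{A_h}^2 - \|x\|_{A_h}^2 - \|y-x\|_{A_h}^2$ with $x = I_{h_t}^m\bar s_tv$, $y = I_{h_t}^{m+1}\bar s_tv$, the $h_t^2\|\bar s_tv^{m+1}\|_{A_h}^2$ term from $\|y-x\|_{A_h}^2$ exactly cancels that from the free $\tfrac{h_t}{2}\bar s_tv^{m+1}$ term, leaving the clean telescoping identity $\mathcal F^{m+1} - \mathcal F^m = 2h_t(u_1 + \tfrac{h_t}{2}f^0 + I_{h_t}^mf, \bar s_tv^{m+1})$ with $\mathcal F^m := \|v^m\|_{\tilde B_h}^2 + \|I_{h_t}^m\bar s_tv\|_{A_h}^2$. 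Summation, Abel summation on the $I_{h_t}^mf$ part (writing $h_t\bar s_tv^{m+1} = S_{m+1} - S_m$ with $S_m := I_{h_t}^m\bar s_tv$), Cauchy--Schwarz in $\|A_h^{-1/2}\cdot\|_h$, and a Gronwall-type argument analogous to part (1) then produce \eqref{energy est2}. The $f = \delta_tg$ refinement uses $I_{h_t}^mf = g^m - g^0$ followed by $\ve_0\|\cdot\|_{B_h} \leq \|\cdot\|_{\tilde B_h}$ to transfer from $A_h^{-1/2}$ to $B_h^{-1/2}$ norms. The hardest step is the delicate bookkeeping in part (2), both to locate the key cancellation that produces the clean telescoping identity and to group the Abel sum and Cauchy--Schwarz estimates so as to extract the precise $L^1_{h_t}$-norm coefficients.
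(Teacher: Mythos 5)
Your proposal is correct in structure, but it takes a different route from the paper only in the sense that it is self-contained: the paper proves Theorem \ref{theo:1} by reduction to the more general method analysed in \cite{ZCh20} (setting $\tau=1$, $B_{1h}=0$) and then only re-estimates the two $f$-dependent summands of the ``strong energy equality'' and ``weak energy equality'' quoted from there. What you reconstruct from scratch is precisely the content of those two equalities: your telescoping identity $\mathcal E^{m+1}-\mathcal E^m=2h_t(f^m,\mathring\delta_tv^m)_h$ with $\mathcal E^m=\|\bar\delta_tv^m\|_{\tilde B_h}^2+\|\bar s_tv^m\|_{A_h}^2$, together with the initial-step identity $\mathcal E^1=(A_hv^0,\bar s_tv^1)_h+(u_1,\bar\delta_tv^1)_h+\tfrac{h_t}{2}(f^0,\bar\delta_tv^1)_h$, is the strong energy equality, and your integrated identity for $\mathcal F^m=\|v^m\|_{\tilde B_h}^2+\|I_{h_t}^m\bar s_tv\|_{A_h}^2$ is the weak one (its source term $2I_{h_t}^m(\tfrac12 h_tf^0+\check I_{h_t}f,\bar s_tv)_h$ is exactly what the paper displays). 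All the algebraic steps I can check are right: $v=\tfrac12(s_tv+\bar s_tv)-\tfrac{h_t^2}{4}\Lambda_tv$, the role of \eqref{stabcond} as $\tilde B_h\geq\ve_0^2B_h$, the two telescoping computations, and the quadratic inequality yielding \eqref{energy est1} with the correct coefficient $\tfrac14 h_t\|f^0\|_h$ inside $\|\cdot\|_{L_{h_t}^1(H_h)}$. What your proof buys is independence from \cite{ZCh20}; what the paper's citation buys is brevity and guaranteed consistency of the sharp constants.

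One caveat on constants. In the alternative strong-norm $f$-bound and in the general-$f$ weak bound, your sketch (``Abel summation, then Cauchy--Schwarz, then Gronwall'') executed naively produces coefficients $4$ where the theorem claims $3\max_{0\leq m\leq M-1}\|A_h^{-1/2}f^m\|_h$ and $2\|A_h^{-1/2}f\|_{L_{h_t}^1(H_h)}$: the Abel boundary terms $2(f^{k-1},\bar s_tv^k)_h-2(f^1,\bar s_tv^1)_h$ give $4\max\|A_h^{-1/2}f^m\|_h\,\Phi$ unless the $-2(f^1,\bar s_tv^1)_h$ term is first recombined with the initial-step contribution $\tfrac{h_t}{2}(f^0,\bar\delta_tv^1)_h=(f^0,\bar s_tv^1)_h-(f^0,v^0)_h$, and in part (2) the summand $2h_t\sum_m(f^m,S_k-S_m)_h$ costs $2\Psi$ per term if bounded crudely by $\|S_k\|_{A_h}+\|S_m\|_{A_h}$. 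These are exactly the delicate groupings the paper inherits from \cite{ZCh20} rather than re-deriving, so you should either carry them out explicitly or accept slightly larger absolute constants; the $f=\delta_tg$ refinement, by contrast, follows from your identity by a single Cauchy--Schwarz exactly as the paper indicates.
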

\begin{proof}
Similar bounds have recently been proved in \cite{ZCh20} for the method
\begin{gather*}
\big(\tau B_h+\sigma h_t^2A_h\big)\Lambda_tv+B_{1h}\mathring{\delta}_tv+A_hv=f\ \ \text{in}\ \ H_h\ \ \text{on}\ \ \omega_{h_t},
\\[1mm]
 \big(\tau B_h+\half h_t B_{1h}+\sigma h_t^2A_h\big)\delta_tv^0+\half h_tA_hv^0=u_1+\half h_tf^0\ \ \text{in}\ \ H_h
\end{gather*}
of a more general form, with the parameter $\tau>0$ and an operator $B_{1h}=B_{1h}^*>0$ acting in $H_h$.
In these bounds, one can take $\tau=1$ and easily see from their proofs that the bounds mainly remain valid for $B_{1h}=B_{1h}^*\geq 0$, in particular, $B_{1h}=0$ (the case considered here), up to the norm of $f$ standing in \eqref{energy est1} and the norm of $g-s_tg^0$ mentioned in Item 2.

\par To verify the validity of the bounds precisely with the norms of $f$ and $\tilde{g}:=g-s_tg^0$ indicated in this theorem, it suffices
to modify bounds for the following summands with $f$ in the strong energy equality in \cite[Theorem 1]{ZCh20}
\begin{gather*}
 \half h_t(f^0,\bar{\delta}_tv^1)_h+2I_{h_t}^{m-1}(f,\mathring{\delta}_tv)_h
 \leq\half h_t\|B_h^{-1/2}f^0\|_h\|\bar{\delta}_tv^1\|_{B_h}
\\[1mm]
 +2I_{h_t}^{M-1}\|B_h^{-1/2}f\|_h\max_{1\leq m\leq M}\|\bar{\delta}_tv^m\|_{B_h}
 \leq \tfrac{2}{\ve_0}
 \|B_h^{-1/2}f\|_{L_{h_t}^1(H_h)}\max_{1\leq m\leq M}\|\bar{\delta}_tv^m\|_{0,\*h}
\end{gather*}
and, setting $\check{I}_{h_t}^mf=I_{h_t}^{m-1}f$, in the weak energy equality in \cite[Theorem 2]{ZCh20}
\begin{gather*}
 2I_{h_t}^m\big(\half h_tf^0+\check{I}_{h_t}f,\bar{s}_tv\big)_h
 =2I_{h_t}^m\big(\tilde{g},\bar{s}_tv\big)_h
 \leq \tfrac{2}{\ve_0}I_{h_t}^M\|B_{h}^{-1/2}\tilde{g}\|_h\max_{1\leq m\leq M}\|\bar{s}_tv^m\|_{0,\*h},
\end{gather*}
for $1\leq m\leq M$,
and the relations $\mathring{\delta}_t=\half(\delta_t+\bar{\delta}_t)$ and \eqref{ve_0ineq}
have been applied.
\end{proof}

\par Clearly in fact the norm $\|\cdot\|_{0,\*h}$ stands on the left in \eqref{energy est1} and on both sides in \eqref{energy est2}.
Bounds of type \eqref{energy est1} with a stronger norm of $f$ can be found in \cite{S77}.

\par Below we also refer to the following stability result.
\begin{remark}
\label{rem1}
Under assumptions \eqref{stabcond} with $\ve_0=0$, instead of bound \eqref{energy est1} the following one holds
\begin{gather*}
 \max_{1\leq m\leq M}
 \big[\|\bar{\delta}_tv^m\|_{B_h}^2+(\sigma-\tfrac14)h_t^2\|\bar{\delta}_tv^m\|_{A_h}^2
 +\|\bar{s}_tv^m\|_{A_h}^2\big]^{1/2} \leq\big[\|v^0\|_{A_h}^2
\nonumber\\[1mm]
+\|(B_h+\sigma h_t^2A_h)^{-1/2}u_1\|_h^2\big]^{1/2}
 +2I_{h_t}^{M-1}\|A_h^{-1/2}\bar{\delta}_tf\|_h+3\max\limits_{0\leq m\leq M-1}\|A_h^{-1/2}f^m\|_h,
\end{gather*}
whereas bound \eqref{energy est2} remains valid (its proof does not change for $\ve_0\geq 0$).

To be convinced of the latter bound, it is necessary to transform and bound differently the terms with $v^0$ and $u_1$ in the case $f=0$ in the strong energy equality in \cite{ZCh20}.
Namely, using the formula $\bar{s}_tv^1=v^0+\half h_t\bar{\delta}_tv^1$ and equation \eqref{3level sch2} with $f^0=0$, we can set $C_h:=(B_h+\sigma h_t^2A_h)^{-1}$ and obtain
\begin{gather*}
 (A_hv^0,\bar{s}_tv^1)_h+(u_1,\bar{\delta}_tv^1)_h=\|v^0\|_{A_h}^2+\big(\half h_tA_hv^0+u_1,\bar{\delta}_tv^1\big)_h
=\|v^0\|_{A_h}^2
\nonumber\\[1mm]
 +\big(C_h\big(\half h_tA_hv^0+u_1\big),-\half h_tA_hv^0+u_1\big)_h
 =\|v^0\|_{A_h}^2+\|u_1\|_{C_h}^2-\|\half h_tA_hv^0\|_{C_h}^2
\label{inid1}
\end{gather*}
since $C_h=C_h^*>0$.
This implies the first bound of this Remark.
\par Notice that $B_h+\sigma h_t^2A_h\geq \ve_0B_h+\tfrac14 h_t^2A_h$ under the assumptions either $\sigma\geq\frac14$ and $\ve_0=1$, or \eqref{stabcond} with $0\leq\ve_0<1$ and, as a corollary,
$C_h\leq\ve_0^{-1}B_h^{-1}$ (for $\ve_0\neq 0$) and $C_h\leq 4h_t^{-2}A_h^{-1}$.

But, for $\ve_0=0$, the quantity $\|w\|_{0,\*h}$ could be (in general) only a semi-norm in $H_h$, and its lower bound by $\|w\|_{B_h}$ uniformly in $\*h$ is not valid any more.
\end{remark}
\par It is well-known that each of bounds \eqref{energy est1}-\eqref{energy est2} implies existence and uniqueness of the solution to method \eqref{3level sch1}-\eqref{3level sch2} for any given $v^0,u_1\in H_h$ and $f$: $\{t_m\}_{m=0}^{M-1}\to H_h$.
The same concerns finite-difference schemes below.

\section{Construction and properties of compact finite-difference schemes of the 4th order of approximation}
\label{numerovschemes}
\setcounter{equation}{0}
\setcounter{lemma}{0}
\setcounter{theorem}{0}

We consider the following IBVP with the nonhomogeneous Dirichlet boundary condition for the slightly generalized wave equation
\begin{gather}
 \partial_t^2u-a_i^2\partial_i^2u =f(x,t)\ \ \text{in}\ \ Q_T=\Omega\times (0,T);
\label{hyperb2eq}
\\[1mm]
 u|_{\Gamma_T}=g(x,t);\ \ u|_{t=0}=u_0(x),\ \ \partial_tu|_{t=0}=u_1(x),\ \ x\in\Omega.
\label{hyperb2ibc}
\end{gather}
Here $a_1>0,\ldots,a_n>0$ are constants, $x=(x_1,\ldots,x_n)$, $\Omega=(0,X_1)\times\ldots\times(0,X_n)$, $n\geq 1$,
$\partial\Omega$ is the boundary of $\Omega$ and $\Gamma_T=\partial\Omega\times (0,T)$ is the lateral surface of $Q_T$.
Hereafter the summation from 1 to $n$ over the repeated indices $i,j$ (and only over them) is assumed.
Below $\delta^{(ij)}$ is the Kronecker symbol.
\par Define the uniform rectangular mesh
$\bar{\omega}_h=\{x_{\*k}=(k_1h_1,\ldots,k_nh_n);\, 0\leq k_1\leq N_1,\ldots,0\leq k_n\leq N_n\}$ in $\bar{\Omega}$
with the steps $h_1=X_1/N_1,\ldots,h_n=X_n/N_n$, $h=(h_1,\ldots,h_n)$ and $\*k=(k_1,\ldots,k_n)$.
Let $\omega_h=\{x_{\*k};\, 1\leq k_1\leq N_1-1,\ldots,1\leq k_n\leq N_n-1\}$ and $\partial\omega_h=\bar{\omega}_h\backslash\omega_h$ be the internal part and boundary of $\bar{\omega}_h$.
Define the meshes $\omega_{\*h}:=\omega_h\times\omega_{h_t}$ in $Q_T$ and $\partial\omega_{\*h}=\partial\omega_h\times\{t_m\}_{m=1}^M$ on $\bar{\Gamma}_T$.

\par We introduce the well-known difference operators
$ (\Lambda_lw)_{\*k}=\tfrac{1}{h_l^2}(w_{\*k+\*e_l}-2w_{\*k}+w_{\*k-\*e_l})$, $l=1,\ldots,n$, on $\omega_{h}$,
where $w_{\*k}=w(x_{\*k})$ and $\*e_1,\ldots,\*e_n$ is the standard coordinate basis in $\mathbb{R}^n$.
\par Let below $H_h$ be the space of functions defined on $\bar{\omega}_h$, equal 0 on $\partial\omega_h$ and endowed with the inner product
$ (v,w)_h=h_1\ldots h_n\sum\nolimits_{x_{\*k}\in\omega_h}v_{\*k}w_{\*k}$
and the norm $\|w\|_h=(w,w)_h^{1/2}$.
\begin{lemma}
\label{lem:psi}
For the sufficiently smooth in $\bar{Q}_T$ solution $u$ to equation \eqref{hyperb2eq},
the following formula holds
\begin{gather}
 \big(s_N-\onetwelve h_t^2a_i^2\Lambda_i\big)\Lambda_tu-a_j^2s_{N\hat{j}}\Lambda_ju-f_N=O(|\*h|^4)\ \
 \text{on}\ \ \omega_{\*h},
\label{approxerrN}
\end{gather}
where
\[
 s_N:=I+\onetwelve h_i^2\Lambda_i,\,\
 s_{N\hat{j}}:=I+(1-\delta^{(ij)})\onetwelve h_i^2\Lambda_i,\,\
 f_N:=f+\onetwelve h_t^2\Lambda_tf+\onetwelve h_i^2\Lambda_if,
\]
and $I$ is the identity operator.
Note that $s_{N\hat{j}}=I$ for $n=1$.
\end{lemma}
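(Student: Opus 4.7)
The plan is a direct Taylor expansion in the mesh steps, followed by substitution of the PDE to cancel the leading error. Starting from the standard one-dimensional expansion $\Lambda_l w = \partial_l^2 w + \onetwelve h_l^2 \partial_l^4 w + O(h_l^4)$ (and likewise for $\Lambda_t$), I first deduce for smooth $w$ the operator expansions $s_N w = w + \onetwelve h_i^2 \partial_i^2 w + O(|\*h|^4)$ and $s_{N\hat{j}} w = w + \sum_{i\neq j}\onetwelve h_i^2 \partial_i^2 w + O(|\*h|^4)$, together with $\Lambda_i\Lambda_t u = \partial_i^2\partial_t^2 u + O(|\*h|^2)$, $\Lambda_t f = \partial_t^2 f + O(h_t^2)$ and $\Lambda_i f = \partial_i^2 f + O(|\*h|^2)$.

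I then substitute these expansions into the left-hand side of \eqref{approxerrN} and group by powers of the mesh steps, obtaining three blocks plus a remainder of order $|\*h|^4$. The $O(1)$ block equals $\partial_t^2 u - a_j^2\partial_j^2 u - f$ and vanishes by the PDE \eqref{hyperb2eq}. The $O(h_t^2)$ block, namely $\onetwelve h_t^2\bigl(\partial_t^4 u - a_i^2\partial_i^2\partial_t^2 u - \partial_t^2 f\bigr)$, vanishes after differentiating \eqref{hyperb2eq} twice in $t$. For the $O(h_i^2)$ block, I apply $\partial_i^2$ to the PDE to obtain $\partial_i^2\partial_t^2 u = a_j^2\partial_i^2\partial_j^2 u + \partial_i^2 f$; after substitution the block reorganises to $\sum_{i,j}\onetwelve h_i^2 a_j^2\partial_i^2\partial_j^2 u - \sum_i \onetwelve h_i^2 a_i^2\partial_i^4 u - \sum_{i\neq j}\onetwelve h_i^2 a_j^2\partial_i^2\partial_j^2 u = 0$, with the mixed ($i\neq j$) contributions cancelling in pairs and the diagonal ones cancelling against the separate $\sum_i$.

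The structural point worth stating en route is that the factor $(1-\delta^{(ij)})$ in $s_{N\hat{j}}$ is designed precisely to delete the $i=j$ term from $\sum_i \Lambda_i\Lambda_j u$; had $s_N\Lambda_j u$ been used in place of $s_{N\hat{j}}\Lambda_j u$, the coefficient in front of $\onetwelve h_j^2 a_j^2 \partial_j^4 u$ would be doubled and the final cancellation would fail. The residual $O(|\*h|^4)$ estimate is uniform on $\omega_{\*h}$ provided $u$ and $f$ possess bounded derivatives of the relevant orders on $\overline{Q}_T$ (up to the sixth in each variable suffices to control the next term in the second-difference expansion). The only real obstacle is bookkeeping: keeping the full Einstein sums carefully separated from the excluded diagonal terms coming from $s_{N\hat{j}}$; once this discipline is observed the two non-trivial cancellations are immediate consequences of the PDE identities above.
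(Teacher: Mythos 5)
Your proof is correct and follows essentially the same route as the paper's first (Numerov-style) proof: both rest on the expansion $\Lambda_l w=\partial_l^2w+\onetwelve h_l^2\partial_l^4w+O(h_l^4)$ and on the differentiated PDE identities $\partial_t^4u=a_i^2\partial_i^2\partial_t^2u+\partial_t^2f$ and $\partial_k^2\partial_t^2u=a_j^2\partial_k^2\partial_j^2u+\partial_k^2f$, the only difference being that you verify the final formula by grouping powers of the steps while the paper derives it forward from the truncation error of the naive explicit scheme. Your cancellation bookkeeping (including the role of the factor $(1-\delta^{(ij)})$) and the stated smoothness requirement are accurate; the paper additionally offers a second, independent proof via finite-element averaging, which you do not need.
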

\begin{proof}
\par We give two different proofs.

\par 1. The first one follows to the classical Numerov approach.
We take the simplest explicit three-level discretization of equation \eqref{hyperb2eq} having the form
\[
 \Lambda_tv-a_i^2\Lambda_iv=f\ \ \text{on}\ \ \omega_{\*h}
\]
(the particular case of equation \eqref{3level sch1} for $B_h=I$, $A_h=-a_i^2\Lambda_iv$ and $\sigma=0$)
and, under the assumption of sufficient smoothness of $u$, select the leading term of its approximation error
$\psi_e:=\Lambda_tu-a_i^2\Lambda_iu-f$:
\begin{equation}
\psi_e
=\Lambda_tu-\partial_t^2u-a_i^2(\Lambda_iu-\partial_i^2u)
=\onetwelve h_t^2\partial_t^4u-\onetwelve h_i^2a_i^2\partial_i^4u+O(|\*h|^4).
\label{apprerror1}
\end{equation}
We express the derivatives $\partial_t^4u$ and $\partial_k^4u$ in terms of mixed derivatives by differentiating equation \eqref{hyperb2eq}:
\begin{equation}
 \partial_t^4u=a_i^2\partial_i^2\partial_t^2u+\partial_t^2f,\ \
 a_k^2\partial_k^4u=\partial_k^2\partial_t^2u-(1-\delta^{(kj)})a_j^2\partial_k^2\partial_j^2u-\partial_k^2f.
\label{cons wave eq}
\end{equation}
Then formula \eqref{apprerror1} takes the form
\[
\psi_e
=\tfrac{h_t^2}{12}a_i^2\partial_i^2\partial_t^2u-\tfrac{h_i^2}{12}\partial_i^2\partial_t^2u
+\tfrac{h_i^2}{12}(1-\delta^{(ij)})a_j^2\partial_i^2\partial_j^2u
+\tfrac{h_t^2}{12}\partial_t^2f+\tfrac{h_i^2}{12}\partial_i^2f+O(|\*h|^4).
\]
Here all the 2nd order derivatives can be replaced by the corresponding symmetric three-point difference discretizations preserving the order of the remainder:
\[
\psi_e
=\tfrac{h_t^2}{12}a_i^2\Lambda_i\Lambda_tu-\tfrac{h_i^2}{12}\Lambda_i\Lambda_tu
+\tfrac{h_i^2}{12}(1-\delta^{(ij)})a_j^2\Lambda_i\Lambda_ju
+\tfrac{h_t^2}{12}\Lambda_tf+\tfrac{h_i^2}{12}\Lambda_if+O(|\*h|^4).
\]
Recalling the definition of $\psi_e$ in \eqref{apprerror1}, we can rewrite the last formula as \eqref{approxerrN}.

\smallskip\par 2. The second proof is based on averaging of equation \eqref{hyperb2eq} related to the polylinear finite elements.
We define the well-known average in the variable $x_k$ related to the linear finite elements
\[
 (q_kw)(x_k)=\tfrac{1}{h_k}\int_{-h_k}^{h_k}w(x_k+\xi)\big(1-\tfrac{|\xi|}{h_k}\big)\,d\xi.
\]
For a function $w(x_k)$ smooth on $[0,X_k]$, the following formulas hold
\begin{gather}
 q_k\partial_k^2w=\Lambda_kw,\ \
\label{qklambdak}
\\[1mm]
 q_kw=w+\onetwelve h_k^2\partial_k^2w+q_k\rho_{k4}(\partial_k^4w)=w+\onetwelve h_k^2\Lambda_kw+\tilde{\rho}_{k4}(\partial_k^4w),
\nonumber\\[1mm]
\hspace{-8pt} |q_k\rho_{ks}(\partial_k^sw)|\leq c_sh_k^s\|\partial_k^sw\|_{C(I_{kl})},\, s=2,4,\
 |\tilde{\rho}_{k4}(\partial_k^4w)|\leq \tilde{c}_4h_k^4\|\partial_k^4w\|_{C(I_{kl})}
\label{resid_bound}
\end{gather}
and
$q_kw=w+q_k\rho_{k2}(\partial_k^2w)$
at the nodes $x_k=x_{kl}:=lh_k$, $1\leq l\leq N_k-1$, with $I_{kl}:=[x_{k(l-1)},x_{k(l+1)}]$.
The first formula is checked by integrating by parts and other formulas hold owing to
the Taylor formula at $x_{kl}$ with the residual in the integral form
\begin{gather}
 \rho_{ks}(w)(x_k)=\tfrac{1}{(s-1)!}\int_{x_{kl}}^{x_k}w(\xi)(x_k-\xi)^{s-1}\,d\xi,
\label{taylor_residual}
\end{gather}
for $s=2,4$,
together with $\tfrac{1}{h_k}\int_{-h_k}^{h_k}\half \xi^2\big(1-\tfrac{|\xi|}{h_k}\big)\,d\xi=\onetwelve h_k^2$.
The respective formulas hold for the averaging operator $q_t$ in the variable $t=x_{n+1}$ as well
(since one can set $X_{n+1}=T$ and $h_{n+1}=h_t$).

We apply the operator $\bar{q}q_t$ with $\bar{q}:=q_1\ldots q_n$ to
equation
\eqref{hyperb2eq} at the nodes of $\omega_{\*h}$ and get
\begin{gather}
 \bar{q}\Lambda_tu-a_i^2 \bar{q}_{\hat{i}}q_t\Lambda_iu=\bar{q}q_tf\ \ \text{with}\ \
 \bar{q}_{\hat{i}}:=\prod_{1\leq k\leq n,\, k\neq i}q_k.
\label{avereq}
\end{gather}
The multiple application of the above formulas for the averages leads to
\begin{gather*}
 \Lambda_tu+\onetwelve h_i^2\Lambda_i\Lambda_tu
 -a_i^2\big[\Lambda_i^2u+(1-\delta^{(ij)})\onetwelve h_j^2\Lambda_j\Lambda_iu+\onetwelve h_t^2\Lambda_i\Lambda_tu\big]
\\[1mm]
 =f+\onetwelve h_i^2\Lambda_if+\onetwelve h_t^2\Lambda_tf+O(|\*h|^4),
\end{gather*}
and thus formula \eqref{approxerrN} is derived once again.
\end{proof}
\begin{remark}
For the first order in time parabolic equation or TDSE, one should apply the simpler averaging
$q_ty^m=\frac{1}{h_t}\int_{t_{m-1}}^{t_m}y(t)\,dt$ in time to derive two-level higher-order compact schemes.
\end{remark}
\par Formula \eqref{approxerrN} means that the discretization of equation \eqref{hyperb2eq} of the form
\begin{equation}
 \big(s_N-\onetwelve h_t^2a_i^2\Lambda_i\big)\Lambda_tv-a_i^2s_{N\hat{i}}\Lambda_iv=f_N\ \ \text{on}\ \ \omega_{\*h}
\label{num0eq}
\end{equation}
has the approximation error of the order $O(|\*h|^4)$.
\par Notice that the coefficients of formulas
\begin{gather*}
 y+\onetwelve h_t^2\Lambda_ty=\onetwelve(\hat{y}+10f+\check{y}),\,\
 \onetwelve h_i^2\Lambda_iw_{\*k}=\onetwelve\delta^{(ii)}(w_{\*k-\*e_i}+w_{\*k+\*e_i})-\tfrac{n}{6}w_{\*k}
\end{gather*}
respectively on $\omega_{h_t}$ and $\omega_h$ are independent of $\*h$.
\par For discretization \eqref{num0eq}, we consider the corresponding equation at $t_0=0$
\begin{equation}
 \big(s_N-\onetwelve h_t^2a_i^2\Lambda_i\big)\delta_tv^0-\half h_ta_i^2s_{N\hat{i}}\Lambda_iv^0=u_{1N}+\half h_tf_N^0\ \ \text{on}\ \ \omega_h,
\label{num0ic}
\end{equation}
cp. \eqref{3level sch1}-\eqref{3level sch2}, and find out for which $u_{1N}$ and $f_N^0$ its approximation error also has the order $O(|\*h|^4)$.
Let $0<\bar{h}_t\leq T$ and $h_t\leq\bar{h}_t$.
\begin{lemma}
\label{lem:psi0}
For the sufficiently smooth in $\bar{Q}_{\bar{h}_t}$ solution $u$ to equation \eqref{hyperb2eq} satisfying the initial conditions from \eqref{hyperb2ibc}, under the choice
\begin{gather}
 u_{1N}=\big(s_N+\onetwelve h_t^2a_i^2\Lambda_i\big)u_1,\,\
\label{u1N}\\[1mm]
 f_N^0=f_{dh_t}^{(0)}+\onetwelve h_i^2\Lambda_if^0,\ \
 f_{dh_t}^{(0)}=f_{d}^{(0)}+O(h_t^3)
\label{tf0N}
\end{gather}
on $\omega_h$, where $f_{d}^{(0)}:=f_0+\tfrac13 h_t(\partial_tf)_0+\onetwelve h_t^2(\partial_t^2f)_0$
with $y_0:=y|_{t=0}$,
the approximation error of equation \eqref{num0ic} satisfies the following formula
\begin{gather}
\psi_e^0:=\big(s_N-\tfrac{h_t^2}{12}a_i^2\Lambda_i\big)(\delta_tu)^0-\tfrac{h_t}{2}a_i^2s_{N\hat{i}}\Lambda_iu_0-u_{1N}-\tfrac{h_t}{2}f_N^0
 =O(|\*h|^4).
\label{psie0}
\end{gather}
Notice
that $f_{d}^{(0)}$ is not
the term $f_0+\onetwelve h_t^2(\partial_t^2f)_0$ of type approximated above.
\end{lemma}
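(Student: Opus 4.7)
The plan is to Taylor-expand $(\delta_t u)^0 = [u(x,h_t)-u_0(x)]/h_t$ about $t=0$ up to order $h_t^3$, substitute into $\psi_e^0$, and show that the contributions cancel in groups up to $O(|\*h|^4)$. Writing $v_k:=(\partial_t^k u)_0$, one has
\[
 (\delta_t u)^0 = u_1 + \tfrac{h_t}{2} v_2 + \tfrac{h_t^2}{6}v_3 + \tfrac{h_t^3}{24}v_4 + O(h_t^4),
\]
and, from $v_0=u_0$, $v_1=u_1$ and the recursion $v_{k+2}=a_i^2\partial_i^2 v_k+(\partial_t^k f)_0$ obtained by differentiating \eqref{hyperb2eq} $k$ times in $t$, one obtains $v_2=a_i^2\partial_i^2 u_0+f_0$, $v_3=a_i^2\partial_i^2 u_1+(\partial_t f)_0$, and $v_4=a_i^2 a_j^2\partial_i^2\partial_j^2 u_0+a_i^2\partial_i^2 f_0+(\partial_t^2 f)_0$. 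Set $P:=s_N-\onetwelve h_t^2 a_i^2\Lambda_i$ and $B:=\tfrac{h_t}{2}a_i^2 s_{N\hat i}\Lambda_i u_0$; the claim is $P(\delta_t u)^0 - B - u_{1N} - \tfrac{h_t}{2}f_N^0 = O(|\*h|^4)$.

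Firstly, the $u_1$-terms cancel cleanly: $Pu_1-u_{1N}=-\tfrac{h_t^2}{6}a_i^2\Lambda_i u_1$ (the sign of the $\onetwelve h_t^2 a_i^2\Lambda_i$ correction in $u_{1N}$ from \eqref{u1N} is opposite to the one in $P$), and this is killed by $\tfrac{h_t^2}{6}Pv_3 = \tfrac{h_t^2}{6}a_i^2\Lambda_i u_1+\tfrac{h_t^2}{6}(\partial_t f)_0 + O(|\*h|^4)$, the substitution $\partial_i^2\leftrightarrow\Lambda_i$ costing only $O(h_t^2 h_i^2)$.

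Secondly, the $u_0$-terms form the key part. Expanding $s_{N\hat i}\Lambda_i u_0 = \partial_i^2 u_0+\sum_j \onetwelve h_j^2\partial_j^2\partial_i^2 u_0 + O(|\*h|^4)$ --- the diagonal $j=i$ piece coming from $\onetwelve h_i^2\partial_i^4 u_0$ inside $\Lambda_i u_0$ itself and the off-diagonal $j\neq i$ pieces from the $(1-\delta^{(ij)})$ factor in $s_{N\hat i}$, reassembling the full sum over $j$ --- and computing $\tfrac{h_t}{2}P(a_i^2\partial_i^2 u_0)$ one obtains the cross terms $\tfrac{h_t h_j^2}{24}a_i^2\partial_j^2\partial_i^2 u_0$ that match $B$, as well as a term $-\tfrac{h_t^3}{24}a_i^2 a_j^2\partial_i^2\partial_j^2 u_0$ that cancels against the $u_0$-piece $\tfrac{h_t^3}{24}a_i^2 a_j^2\partial_i^2\partial_j^2 u_0$ of $\tfrac{h_t^3}{24}Pv_4$. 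The surviving $u_0$-group residue is just $\tfrac{h_t^3}{24}a_i^2\partial_i^2 f_0 + \tfrac{h_t^3}{24}(\partial_t^2 f)_0$.

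Finally, all remaining $f$-contributions assemble: $\tfrac{h_t}{2}Pf_0 = \tfrac{h_t}{2} f_0 + \tfrac{h_t h_i^2}{24}\Lambda_i f_0 - \tfrac{h_t^3}{24} a_i^2\partial_i^2 f_0 + O(|\*h|^4)$ kills the $+\tfrac{h_t^3}{24}a_i^2\partial_i^2 f_0$ above, while its $\tfrac{h_t h_i^2}{24}\Lambda_i f_0$ matches the $\tfrac{h_t}{2}\cdot\onetwelve h_i^2\Lambda_i f^0$ part of $\tfrac{h_t}{2}f_N^0$. The net residue is
\[
 \psi_e^0 = \tfrac{h_t}{2}\bigl[f_0 + \tfrac{h_t}{3}(\partial_t f)_0 + \tfrac{h_t^2}{12}(\partial_t^2 f)_0 - f_{dh_t}^{(0)}\bigr] + O(|\*h|^4) = \tfrac{h_t}{2}\bigl[f_d^{(0)} - f_{dh_t}^{(0)}\bigr] + O(|\*h|^4),
\]
which is $O(h_t^4)\subset O(|\*h|^4)$ by \eqref{tf0N}; this also explains why the specific combination defining $f_d^{(0)}$ in the lemma is exactly the right one. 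The main technical difficulty lies in the $u_0$-group: three separate cross-derivative contributions (from the $(1-\delta^{(ij)})$ factor in $s_{N\hat i}$, from $P$ acting on $\partial_i^2 u_0$, and from the quartic-time piece of $Pv_4$) must be arranged to cancel exactly, which in turn dictates the very definition of $s_{N\hat i}$ and the sign $-\onetwelve h_t^2 a_i^2\Lambda_i$ inside $P$.
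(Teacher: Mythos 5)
Your argument is correct and follows essentially the same route as the paper's first proof: Taylor expansion of $(\delta_tu)^0$ in $t$, repeated use of the differentiated wave equation to trade time derivatives for spatial ones, and replacement of $\partial_i^2$ by $\Lambda_i$ at the cost of admissible remainders, with only a cosmetic difference in bookkeeping (you group terms by the data functions $u_1$, $u_0$, $f$, while the paper groups by the parity of the time derivatives). The paper additionally gives a second, independent derivation via the averaging operators $\bar q q_t$, which your proposal does not address but is not needed for correctness.
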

\begin{proof}
Let $0\leq t\leq\bar{h}_t$.
Once again we give two proofs.

1. Using Taylor's formula in $t$ and grouping separately terms with the time derivatives of odd and even orders, we obtain
\begin{gather*}
 \psi_e^0=\big(s_N-\onetwelve h_t^2a_i^2\Lambda_i\big)(\partial_tu)_0+\tfrac16 h_t^2(\partial_t^3u)_0
 +\half h_t\big[\big(s_N-\onetwelve h_t^2a_i^2\Lambda_i\big)(\partial_t^2u)_0
\\[1mm]
 +\onetwelve h_t^2(\partial_t^4u)_0
 -a_i^2s_{N\hat{i}}\Lambda_iu_0\big]-u_{1N}-\half h_tf_N^0+O(|\*h|^4).
\end{gather*}
In virtue of equation \eqref{hyperb2eq} we have
\[
 \partial_t^3u=a_i^2\partial_i^2\partial_tu+\partial_tf=a_i^2\Lambda_i^2\partial_tu+\partial_tf+O(|h|^2).
\]
Moreover, $(\partial_tu)_0=u_1$, therefore we find
\begin{gather}
 \big(s_N-\onetwelve h_t^2a_i^2\Lambda_i\big)(\partial_tu)_0+\tfrac16 h_t^2(\partial_t^3u)_0
\nonumber\\[1mm]
 =\big(s_N+\onetwelve h_t^2a_i^2\Lambda_i\big)u_1+\tfrac16 h_t^2(\partial_tf)_0+O(|\*h|^4).
\label{psie0_1}
\end{gather}
Next, the first formula \eqref{cons wave eq} implies $\partial_t^4u=a_i^2\Lambda_i\partial_t^2u+\partial_t^2f+O(|h|^2)$ and thus
\begin{gather*}
 \big(s_N-\onetwelve h_t^2a_i^2\Lambda_i\big)(\partial_t^2u)_0
 +\onetwelve h_t^2(\partial_t^4u)_0
 =s_N(\partial_t^2u)_0+\onetwelve h_t^2(\partial_t^2f)_0+O(|\*h|^4).
\end{gather*}
Using \eqref{hyperb2eq} for $t=0$ and the formula $s_N=s_{N\hat{k}}+\onetwelve h_k^2\Lambda_k$, we also have
\begin{gather*}
 s_N(\partial_t^2u)_0-a_i^2s_{N\hat{i}}\Lambda_iu_0
 =s_N(a_i^2\partial_i^2u_0+f_0)-a_i^2s_{N\hat{i}}\Lambda_iu_0
\\[1mm]
  =a_i^2s_{N\hat{i}}(\partial_i^2u_0-\Lambda_iu_0)
 +\onetwelve h_i^2a_i^2\Lambda_i\partial_i^2u_0+s_Nf_0
\\[1mm]
 =a_i^2s_{N\hat{i}}\big(-\onetwelve h_i^2\partial_i^4u_0\big)+\onetwelve h_i^2a_i^2\Lambda_i\partial_i^2u_0+s_Nf_0+O(|h|^4)
\\[1mm]
 =\onetwelve h_i^2a_i^2(\Lambda_i\partial_i^2u_0-\partial_i^4u_0)+s_Nf_0+O(|h|^4)=s_Nf_0+O(|h|^4).
\end{gather*}
Therefore we have proved the formula
\[
 \big(s_N-\onetwelve h_t^2a_i^2\Lambda_i\big)(\partial_t^2u)_0
 +\onetwelve h_t^2(\partial_t^4u)_0
 -a_i^2s_{N\hat{i}}\Lambda_iu_0
 =s_Nf_0+\onetwelve h_t^2(\partial_t^2f)_0+O(|h|^4).
\]
This formula and
\eqref{psie0_1} under choice \eqref{u1N}-\eqref{tf0N} lead to formula \eqref{psie0}.

\par 2. Again the second proof is based on averaging of equation \eqref{hyperb2eq}.
We define the related one-sided average in $t$ over $(0,h_t)$
\begin{gather}
 q_ty^0=\tfrac{2}{h_t}\int_0^{h_t}y(t)\big(1-\tfrac{t}{h_t}\big)\,dt
\label{qty0}
\end{gather}
and apply $\tfrac{h_t}{2}q_t(\cdot)^0$ to
\eqref{hyperb2eq}.
Since $\tfrac{h_t}{2}(q_t\partial_tu)^0=(\delta_tu)^0-(\partial_tu)_0$, we get
\begin{gather}
 \bar{q}(\delta_tu)^0-\tfrac{h_t}{2}a_i^2\bar{q}_{\hat{i}}\Lambda_iq_tu^0=\bar{q}u_1+\tfrac{h_t}{2}\bar{q}q_tf^0.
\label{avereq_0}
\end{gather}
Using Taylor's formula
at $t=0$ and calculating the arising integrals, we find
\begin{gather}
 \tfrac{h_t}{2}q_tf^0=\tfrac{h_t}{2}f_0+\tfrac{h_t^2}{6}(\partial_tf)_0+\tfrac{h_t^3}{24}(\partial_t^2f)_0+O(h_t^4)
 =\tfrac{h_t}{2}f_d^{(0)}+O(h_t^4).
\label{expan_f}
\end{gather}
Here we omit the integral representations for $O(h_t^4)$-terms for brevity.
As in the proof of Lemma \ref{lem:psi} and owing to the last expansion, we have $\bar{q}(\delta_tu)^0=s_N(\delta_tu)^0+O(|h|^4)$ and \begin{gather}
 \bar{q}u_1=s_Nu_1+O(|h|^4),\ \
 \tfrac{h_t}{2}q_t\bar{q}f^0=\tfrac{h_t}{2}f_d^{(0)}+\tfrac{1}{12}h_i^2\Lambda_if_0+O(|\*h|^4).
\label{aver_prop}
\end{gather}
\par Also owing to Taylor's formula in $t$ at $t=0$ we can write down
\[
 u(\cdot,t)=u_0+tu_1+\tfrac{t^2}{h_t}((\delta_tu)^0-u_1)+O(t^3).
\]
Thus similarly first to \eqref{expan_f} and second to \eqref{aver_prop} we obtain
\begin{gather*}
 \tfrac{h_t}{2}a_i^2\bar{q}_{\hat{i}}\Lambda_iq_tu^0
 =\tfrac{h_t}{2}a_i^2\bar{q}_{\hat{i}}\Lambda_iu_0+\tfrac{h_t^2}{6}a_i^2\bar{q}_{\hat{i}}\Lambda_iu_1
 +\tfrac{h_t^2}{12}a_i^2\bar{q}_{\hat{i}}\Lambda_i((\delta_tu)^0-u_1)+O(h_t^4)
\\[1mm]
 =\tfrac{h_t}{2}a_i^2s_{N\hat{i}}\Lambda_iu_0
 +\tfrac{h_t^2}{12}a_i^2\Lambda_iu_1
 +\tfrac{h_t^2}{12}a_i^2s_{N\hat{i}}\Lambda_i(\delta_tu)^0+O(|\*h|^4).
\end{gather*}
Inserting all the derived formulas into \eqref{avereq_0}, we again obtain the desired result.
\end{proof}
\begin{remark}
If $f$ is sufficiently smooth in $t$ in $\bar{Q}_{\bar{h}_t}$, then the property $f_{dh_t}^{(0)}=f_{d}^{(0)}+O(h_t^3)$ (see \eqref{tf0N}) holds for the following three- and two-level approximations
\begin{gather*}
f_{dh_t}^{(0)}=\tfrac{7}{12}f^0+\half f^1-\onetwelve f^2,\ \
f_{dh_t}^{(0)}=\tfrac13f^0+\tfrac23f^{1/2}\ \ \text{with}\ \  f^{1/2}:=f|_{t=h_t/2}.
\label{ftd02}
\end{gather*}
One can easily check this using the Taylor formula in $t$ at $t=0$.

If $f$ is sufficiently smooth in $t$ in $\bar{\Omega}\times [-\bar{h}_t,\bar{h}_t]$, then clearly the same property
holds for the one more three-level approximation
\begin{gather*}
 f_{dh_t}^{(0)}=f^0+\tfrac13 h_t\mathring{\delta}_tf^0+\onetwelve h_t^2\Lambda_tf^0
 =-\onetwelve f^{-1}+\tfrac56f^0+\tfrac14 f^1\ \text{with}\ f^{-1}:=f|_{t=-h_t}.
\end{gather*}
\end{remark}
\begin{remark}
\label{rem:nonsmooth_f}
Below we consider the case of non-smooth $f$.
Namely the above second proofs of Lemmas \ref{lem:psi}-\ref{lem:psi0} clarify that then $f_N^m$ should be replaced with
$\bar{q}q_tf^m$, $0\leq m\leq M-1$, according to \eqref{avereq} and \eqref{avereq_0} and identically to the polylinear FEM with the weight \cite{Z94}, or with some its suitable approximation.
\end{remark}

\par In the simplest case $n=1$,
equations \eqref{num0eq}-\eqref{num0ic} supplemented with the boundary condition take the following form
\begin{gather}
 \big[I+\tfrac{1}{12}(h_1^2-a_1^2h_t^2)\Lambda_1\big]\Lambda_tv-a_1^2\Lambda_1v=f_N,
\label{num1eq1d}\\[1mm]
 v|_{\partial\omega_{\*h}}=g,\ \
 \big[I+\tfrac{1}{12}(h_1^2-a_1^2h_t^2)\Lambda_1\big]\delta_tv^0-\half h_ta_1^2\Lambda_1v^0=u_{1N}+\half h_tf_N^0,
\label{num1ic1d}
\end{gather}
where equations are valid respectively on $\omega_{\*h}$ and $\omega_h$.
Hereafter we assume that the function $v^0$ is given on $\bar{\omega}_h$ and take the general nonhomogeneous Dirichlet boundary condition.
This scheme can be interpreted as the particular case of scheme \eqref{3level sch1}-\eqref{3level sch2} with the operators $B_h=I$ and $A_h=-a_1^2\Lambda_1$ and the weight
$\sigma=\sigma(\*h)=\tfrac{1}{12}\big(1-\tfrac{h_1^2}{a_1^2h_t^2}\big)$
(a similar choice of $\sigma$ was used in \cite{S77} in the 1D parabolic case) or
the bilinear finite element method \cite{Z94}
with $B_h=I+\tfrac{1}{6}h_1^2\Lambda_1$, $A_h=-a_1^2\Lambda_1$ and
$\sigma=\sigma(\*h)=\tfrac{1}{12}\big(1+\tfrac{h_1^2}{a_1^2h_t^2}\big)$
(though the right-hand sides of the equations are not the same; but see also Remark \ref{rem:nonsmooth_f}).

\par But for $n\geq 2$ the above constructed equations \eqref{num0eq}-\eqref{num0ic} are not of type
\eqref{3level sch1}-\eqref{3level sch2}.
Therefore we replace them with the following one
\begin{gather}
 \big(s_N+\onetwelve h_t^2A_N\big)\Lambda_tv+A_Nv=f_N\ \ \text{on}\ \ \omega_{\*h},
\label{num1eq}\\[1mm]
  v|_{\partial\omega_{\*h}}=g,\ \
  (s_N+\onetwelve h_t^2A_N)\delta_tv^0+\half h_tA_Nv^0=u_{1N}+\half h_tf_N^0\ \ \text{on}\ \ \omega_h,
\label{num1ic}
\end{gather}
where $A_N:=-a_i^2s_{N\hat{i}}\Lambda_i$, that corresponds to the case $B_h=s_N$, $A_h=A_N$ and $\sigma=\onetwelve$.
Since $A_N+a_i^2\Lambda_i=a_i^2(I-s_{N\hat{i}})\Lambda_i$, we have
$h_t^2(A_N+a_i^2\Lambda_i)\Lambda_tu=O(|\*h|^4)$ and $h_t^2(A_N+a_i^2\Lambda_i)(\delta_tu)^0=O(|\*h|^4)$ for a function $u$ sufficiently smooth in $\bar{Q}_T$, and thus the approximation errors of the both equations of this scheme are also of the order $O(|\*h|^4)$.

\par But the latter scheme fails for $n\geq 3$ similarly to \cite{DZR15} in the case of the TDSE.
The point is that $s_N$ should approximate $I$ adequately, but for the minimal and maximal eigenvalues of
$s_N<I$ as the operator in $H_h$ we have
\[
\lambda_{\min}(s_N)=1-\tfrac{1}{3}\sin^2\tfrac{\pi (N_i-1)}{2N_i}<\lambda_{\max}(s_N)<1.
\]
Therefore $\lambda_{\min}(s_N)>1-\tfrac{n}{3}$ and $\lambda_{\min}(s_N)=1-\tfrac{n}{3}+O\big(\delta^{(ii)}\frac{1}{N_i^2}\big)$ that is suitable for $n=1,2$,
but
$s_N$ becomes almost singular for $n=3$ and even $\lambda_{\min}(s_N)<0$ (i.e., $s_N$ is not positive definite any more) for $n\geq 4$, for small $|h|$.

Thus for $n=3$ it is of sense to replace the last scheme with the scheme
\begin{gather}
 \big(\bar{s}_N+\onetwelve h_t^2A_N\big)\Lambda_tv+A_Nv=f_N\ \ \text{on}\ \ \omega_{\*h},
\label{num2eq}\\[1mm]
 v|_{\partial\omega_{\*h}}=g,\ \
 (\bar{s}_N+\onetwelve h_t^2A_N)\delta_tv^0+\half h_tA_Nv^0=u_{1N}+\half h_tf_N^0\ \ \text{on}\ \ \omega_h.
\label{num2ic}
\end{gather}
Moreover, for any $n\geq 1$ we can use the following scheme
\begin{gather}
 \big(\bar{s}_N+\onetwelve h_t^2\bar{A}_N\big)\Lambda_tv+\bar{A}_Nv=f_N\ \ \text{on}\ \ \omega_{\*h},
\label{num3eq}\\[1mm]
 v|_{\partial\omega_{\*h}}=g,\ \
 \big(\bar{s}_N+\onetwelve h_t^2\bar{A}_N\big)\delta_tv^0+\half h_t\bar{A}_Nv^0=u_{1N}+\half h_tf_N^0\ \ \text{on}\ \ \omega_h
\label{num3ic}
\end{gather}
(cp. \cite{DZR15} in the case of the TDSE);
for $n=1$ it coincides with \eqref{num1eq1d}-\eqref{num1ic1d}.
Here the operators
\[
 \bar{s}_N:=\prod_{k=1}^ns_{kN},\,\ \bar{s}_{N\hat{l}}:=\prod_{1\leq k\leq n,\,k\neq l}s_{kN},\,\
 s_{kN}:=I+\onetwelve h_k^2\Lambda_k,\,\
 \bar{A}_N:=-a_i^2\bar{s}_{N\hat{i}}\Lambda_i
\]
are used, with $\bar{s}_{N\hat{l}}=I$ for $n=1$.
The operator $\bar{s}_N$ is the splitting version of $s_N$, and $\bar{s}_{N\hat{l}}$ is the $(n-1)$-dimensional case of $\bar{s}_N$.
Clearly $\bar{A}_N=A_N$ for $n=1,2$.
Herewith for the minimal and maximal eigenvalues of $\bar{s}_N<I$ as the operator in $H_h$ we have
\[
 (\tfrac23)^n<\lambda_{\min}(\bar{s}_N)=\prod_{k=1}^n 1-\tfrac{1}{3}\sin^2\tfrac{\pi (N_k-1)}{2N_k}<\lambda_{\max}(\bar{s}_N)<1.
\]
Moreover, the following relation between $\bar{s}_N$ and $s_N$ holds
\begin{gather}
 \bar{s}_N=s_N+\sum_{k=2}^n\sum_{1\leq i_1<\ldots<i_k\leq n}
 \onetwelve h_{i_1}^2\Lambda_{i_1}\ldots\onetwelve h_{i_k}^2\Lambda_{i_k}.
\label{bsNsN}
\end{gather}

\par In virtue of the last formula we have $(\bar{s}_N-s_N)\Lambda_tu=O(|h|^4)$ and $(\bar{s}_N-s_N)(\delta_tu)^0=O(|h|^4)$ for a function $u$ sufficiently smooth in $\bar{Q}_T$, thus the approximation errors of the both equations of scheme \eqref{num2eq}-\eqref{num2ic} still have the order $O(|\*h|^4)$ as for the previous scheme \eqref{num1eq}-\eqref{num1ic}.

\par Since $\bar{A}_N-A_N=-a_i^2(\bar{s}_{N\hat{i}}-s_{N\hat{i}})\Lambda_i$, in virtue of \eqref{bsNsN} we have
$(\bar{A}_N-A_N)y=O(|h|^4)$
for $y=\Lambda_tu, u, (\delta_tu)^0$ and a function $u$ sufficiently smooth in $\bar{Q}_T$, and thus the approximation errors of the both equations of scheme \eqref{num3eq}-\eqref{num3ic} also have the order $O(|\*h|^4)$ as for the previous scheme \eqref{num2eq}-\eqref{num2ic}.

\par Finally, we recommend to apply scheme \eqref{num0eq}-\eqref{num0ic} only in the case $n=1$ when it takes the form \eqref{num1eq1d}-\eqref{num1ic1d}.
Instead, for $n=2$ and $3$, respectively schemes \eqref{num1eq}-\eqref{num1ic} and \eqref{num2eq}-\eqref{num2ic} can be applied.
Scheme \eqref{num3eq}-\eqref{num3ic} is more universal and can be applied for any $n\geq 1$; for $n=1$, it coincides with
\eqref{num1eq1d}-\eqref{num1ic1d} but for $n=2$ and 3 its operators are more complicated than in \eqref{num1eq}-\eqref{num1ic} and \eqref{num2eq}-\eqref{num2ic} and thus it can be more spatially dissipative in practice.

\begin{remark}
\label{rem:good-scheme}
Importantly, for example, scheme \eqref{num3eq}-\eqref{num3ic} could be derived immediately like in the second proofs of Lemmas \ref{lem:psi}-\ref{lem:psi0} by applying more
direct
though more complicated approximations of the averages in
\eqref{avereq} and \eqref{avereq_0}:
\begin{gather*}
 \bar{q}\Lambda_tu-a_i^2\bar{q}_{\hat{i}}q_t\Lambda_iu
 =\bar{s}_N\Lambda_tu-a_i^2\bar{s}_{N\hat{i}}(I+\tfrac{h_t^2}{12}\Lambda_t)\Lambda_iu+O(|\*h|^4)
\\[1mm]
 =(\bar{s}_N+\tfrac{h_t^2}{12}\bar{A}_N)\Lambda_tu+\bar{A}_Nu+O(|\*h|^4),
\\[1mm]
 \bar{q}(\delta_tu)^0-\tfrac{h_t}{2}a_i^2\bar{q}_{\hat{i}}\Lambda_iq_tu^0
 =\bar{s}_N(\delta_tu)^0-a_i^2\bar{s}_{\hat{i}}\Lambda_i\big(\tfrac{h_t}{2}u_0
 +\tfrac{h_t^2}{12}u_1
 +\tfrac{h_t^2}{12}(\delta_tu)^0\big)
\\[1mm]
 +O(|\*h|^4)=\big(\bar{s}_N+\tfrac{h_t^2}{12}\bar{A}_N\big)(\delta_tu)^0+\tfrac{h_t}{2}\bar{A}_Nu_0
 -\tfrac{h_t^2}{12}a_i^2\Lambda_iu_1+O(|\*h|^4).
\end{gather*}
\end{remark}

For $n=1$, implementation of scheme \eqref{num1eq1d}-\eqref{num1ic1d} is simple and at each time level $\{t_m\}_{m=1}^M$
comes down to solving systems of linear algebraic equations with the same tridiagonal matrix.
For $n\geq 2$, all the constructed schemes can be effectively implemented by means of solving the systems of linear algebraic equations with the same matrix arising at each time level using FFT with respect to sines in all (or $n-1$) spatial directions (after excluding the given values $\hat{v}|_{\partial\omega_h}=\hat{g}$ in the equations at the nodes closest to $\partial\omega_h$).
The matrices are non-singular (more exactly, symmetric and positive definite after the mentioned excluding) that is definitely guaranteed under the hypotheses of Theorem \ref{theo:2} below.
Note that the FFT-based algorithms have been very effective in practice in the recent study \cite{ZZ20}.
\begin{remark}
It is not difficult to extend the constructed schemes to the case of more general equation $\rho\partial_t^2u-a_i^2\partial_i^2u=f$ with $\rho=\rho(x)>0$ sufficiently smooth in $\bar{\Omega}$.
Namely, applying the alternative technique, one should simply replace the terms $s_N\Lambda_tu$, $s_N(\delta_tu)^0$ and $s_Nu_1$
with $s_N(\rho\Lambda_tu)$, $s_N(\rho(\delta_tu)^0)$ and $s_N(\rho u_1)$ in \eqref{approxerrN}, \eqref{psie0} and \eqref{u1N} keeping the same approximation orders.
Consequently the terms $s_N\Lambda_tv$, $s_N\delta_tv^0$, $\bar{s}_N\Lambda_tv$ and $\bar{s}_N\delta_tv^0$
are generalized as $s_N(\rho\Lambda_tv)$, $s_N(\rho\delta_tv^0)$, $\bar{s}_N(\rho\Lambda_tv)$ and $\bar{s}_N(\rho\delta_tv^0)$
in \eqref{num0eq}-\eqref{num0ic}, \eqref{num1eq}-\eqref{num1ic}, \eqref{num2eq}-\eqref{num2ic} and \eqref{num3eq}-\eqref{num3ic}
keeping the same approximation order $O(|\*h|^4)$.
Also the following expansions in $\Lambda_k$ for the arising operators at the upper level hold, for $n=2$ and 3, respectively
\begin{gather*}
 s_N(\rho w)+\onetwelve h_t^2A_Nw=\rho w+\onetwelve\big[h_i^2\Lambda_i(\rho w)-a_i^2h_t^2\Lambda_iw\big]
\\[1mm]
 -(\onetwelve)^2h_t^2\big(a_1^2h_2^2+a_2^2h_1^2\big)\Lambda_1\Lambda_2w,
\\[1mm]
 \bar{s}_N(\rho w)+\onetwelve h_t^2\bar{A}_Nw
 =\rho w+\onetwelve\big[h_i^2\Lambda_i(\rho w) -a_i^2h_t^2\Lambda_iw\big]
\\[1mm]
 +(\onetwelve)^2\sum_{1\leq k<l\leq 3}\big[h_k^2h_l^2\Lambda_k\Lambda_l(\rho w)-h_t^2(a_k^2h_l^2+a_lh_k^2)\Lambda_k\Lambda_lw\big]
\\[1mm]
 +(\onetwelve)^3\big[h_1^2 h_2^2h_3^2\Lambda_1\Lambda_2\Lambda_3(\rho w)
 -h_t^2(a_1^2h_2^2h_3^2+a_2^2h_1^2h_3^2+a_3^2h_1^2h_2^2)\Lambda_1\Lambda_2\Lambda_3w\big].
\end{gather*}
For $a_i$ and $h_i$ independent on $i$, the formulas are simplified, and there, on the left, the operators differ only up to factors from ones appearing in the related formulas (21)-(22) in \cite{BTT18} and (11) in \cite{STT19}.
Moreover, one can show that in this case generalized equations \eqref{num1eq} for $n=2$ and \eqref{num3eq} for $n=3$ are equivalent to respective methods from \cite{BTT18,STT19} up to approximations of $f$.
But the stability and implementation issues in the generalized case are more complicated and are beyond the scope of this paper.
\end{remark}

\par For $n\geq 2$, we also write down the scheme
\begin{gather}
 \bar{B}_N\Lambda_tv+\bar{A}_Nv=f_N\ \ \text{on}\ \ \omega_{\*h},
\label{num4eq}\\[1mm]
 v|_{\partial\omega_{\*h}}=g,\ \
 \bar{B}_N\delta_tv^0+\half h_t\bar{A}_Nv^0=u_{1N}+\half h_tf_N^0\ \ \text{on}\ \ \omega_h
\label{num4ic}
\end{gather}
with the following splitting operator at the upper time level
\begin{gather}
\hspace{-6pt} \bar{B}_N:=B_{1N}\ldots B_{nN},\
 B_{kN}:=s_{kN}-\onetwelve h_t^2a_k^2\Lambda_k=I+\onetwelve(h_k^2-h_t^2a_k^2\big)\Lambda_k.
\label{bBN}
\end{gather}
Splitting of such type is well-known and widely used, in particular, see \cite{S77,Z94}, and the implementation of this scheme is most simple and comes down to sequential solving of systems with tridiagonal matrices in all $n$ spatial directions which are definitely non-singular under the hypotheses of Theorem \ref{theo:2} below.

The following relation between $\bar{B}_N$ and $\bar{s}_N$ holds
$\bar{B}_N=\bar{s}_N+\onetwelve h_t^2\bar{A}_N+R$
with the ``residual'' operator
\begin{gather}
 R:=\sum_{k=2}^n\big(\onetwelve h_t^2\big)^k \sum_{1\leq i_1<\ldots<i_k\leq n}a_{i_1}^2\ldots a_{i_k}^2
 \Big(\prod_{1\leq l\leq n,\,l\neq i_1,\ldots i_k}s_{lN}\Big)(-\Lambda_{i_1})\ldots(-\Lambda_{i_k}).
\label{resid_oper}
\end{gather}
Clearly $R$ as the operator in $H_h$ satisfies $R=R^*>0$.
In particular,
one has
\begin{gather*}
 R=(\onetwelve h_t^2\big)^2a_1^2a_2^2\Lambda_1\Lambda_2
\ \ \text{for}\ \ n=2,
\\[1mm]
 R=(\onetwelve h_t^2\big)^2\big(a_1^2a_2^2s_{3N}\Lambda_1\Lambda_2
                               +a_1^2a_3^2s_{2N}\Lambda_1\Lambda_3
                               +a_2^2a_3^2s_{1N}\Lambda_2\Lambda_3\big)
\\[1mm]
 -(\onetwelve h_t^2\big)^3a_1^2a_2^2a_3^2\Lambda_1\Lambda_2\Lambda_3
\ \ \text{for}\ \ n=3.
\end{gather*}

Since $R\Lambda_tu=O(h_t^4)$ and $R(\delta_tu)^0=O(h_t^4)$ for a function $u$ sufficiently smooth in $\bar{Q}_T$, scheme \eqref{num4eq}-\eqref{num4ic} has the approximation error $O(|\*h|^4)$ as scheme \eqref{num3eq}-\eqref{num3ic}.
Note that some other known methods of splitting
are able to deteriorate this order of approximation.
\par Now we study the operator inequality in \eqref{ahbh} for the above arisen operators.
\begin{lemma}
\label{lem:BhAh}
For the pairs of operators $(B_h,A_h)=(s_N,A_N)$ for $n=2$, $(B_h,A_h)=(\bar{s}_N,A_N)$ for $n=3$, $(\bar{s}_N,\bar{A}_N)$ for $n\geq 1$ and $(\bar{s}_N+R,\bar{A}_N)$  for $n\geq 2$, the following inequality holds
\begin{gather}
 A_h\leq\alpha_h^2B_h\ \ \text{in}\ \ H_h\ \ \text{with}\ \ \alpha_h^2<6C_0\tfrac{a_i^2}{h_i^2},
\label{BhAh}
\end{gather}
where $C_0=\tfrac43$ in the first case of $(B_h,A_h)$
or $C_0=1$ in other cases.
\end{lemma}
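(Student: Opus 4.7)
The plan is to reduce \eqref{BhAh} to scalar eigenvalue inequalities by exploiting that all operators involved are polynomials in the pairwise commuting $\Lambda_1,\ldots,\Lambda_n$ and therefore share the same orthogonal tensor-product sine eigenbasis on $H_h$. Each $-\Lambda_k$ is self-adjoint positive with eigenvalues $\mu_k=\tfrac{4}{h_k^2}\sin^2\tfrac{\pi j_k}{2N_k}\in(0,\tfrac{4}{h_k^2})$ (strictly, since $1\leq j_k\leq N_k-1$), so $s_{kN}$, $s_N$, $\bar{s}_N$, $s_{N\hat{i}}$, $\bar{s}_{N\hat{i}}$, $A_N$, $\bar{A}_N$ and $R$ are jointly diagonalized. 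Introducing the dimensionless parameters $x_k:=\tfrac{h_k^2\mu_k}{12}\in(0,\tfrac{1}{3})$ and $\beta_k:=\tfrac{12a_k^2}{h_k^2}$, the inequality $A_h\leq\alpha_h^2 B_h$ is equivalent to a uniform upper bound, over $(x_1,\ldots,x_n)\in(0,\tfrac{1}{3})^n$, for the ratio of the common eigenvalues of $A_h$ and $B_h$.

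For the pair $(\bar{s}_N,\bar{A}_N)$, valid for any $n\geq 1$, the product structure makes this ratio telescope: on each joint eigenvector it equals
\[
\sum_i \frac{\beta_i x_i \prod_{j\neq i}(1-x_j)}{\prod_k(1-x_k)}=\sum_i\frac{\beta_i x_i}{1-x_i}<\sum_i\frac{\beta_i}{2}=6\sum_i\frac{a_i^2}{h_i^2},
\]
where $x_i<\tfrac{1}{3}$ forces $x_i/(1-x_i)<\tfrac{1}{2}$ strictly, giving \eqref{BhAh} with $C_0=1$. For $(\bar{s}_N,A_N)$ with $n=3$, the same $C_0=1$ follows by first noting the trivial domination $1-x_j-x_k\leq(1-x_j)(1-x_k)$ in each summand $\beta_ix_i(1-\sum_{l\neq i}x_l)/\prod_k(1-x_k)$, which reduces the ratio to $\sum_i\beta_ix_i/(1-x_i)$ and again invokes the previous bound. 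The case $(\bar{s}_N+R,\bar{A}_N)$ for $n\geq 2$ is immediate: $R=R^*\geq 0$ is manifest from \eqref{resid_oper} as a sum of products of pairwise commuting self-adjoint positive operators $s_{lN}$ and $-\Lambda_{i_r}$, hence $\bar{s}_N+R\geq\bar{s}_N$ and the bound for $(\bar{s}_N,\bar{A}_N)$ transfers with the same $\alpha_h^2$.

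The main obstacle is the remaining pair $(s_N,A_N)$ for $n=2$: here $s_N$ has additive rather than product form, no clean telescoping is available, and a coarser estimate with a larger constant is unavoidable. The key step is the identity $1-x_j=(1-x_1-x_2)+x_i$ applied in each summand, which rewrites
\[
\frac{\beta_1 x_1(1-x_2)+\beta_2 x_2(1-x_1)}{1-x_1-x_2}=\beta_1 x_1+\beta_2 x_2+\frac{\beta_1 x_1^2+\beta_2 x_2^2}{1-x_1-x_2}.
\]
The elementary bounds $x_i^2\leq\tfrac{1}{3}x_i$ and $1-x_1-x_2>\tfrac{1}{3}$, both consequences of $x_i<\tfrac{1}{3}$, force the second term on the right to be dominated by $\beta_1x_1+\beta_2x_2$, and a final application of $x_i<\tfrac{1}{3}$ delivers the strict bound $\tfrac{2}{3}(\beta_1+\beta_2)=8(a_1^2/h_1^2+a_2^2/h_2^2)=6\cdot\tfrac{4}{3}\sum_i a_i^2/h_i^2$, i.e.\ \eqref{BhAh} with $C_0=\tfrac{4}{3}$. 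Thus the only place where the constant worsens from $C_0=1$ to $C_0=\tfrac{4}{3}$ is precisely this additive $n=2$ case, consistent with the paper's motivation for introducing the product-form operator $\bar{s}_N$ as $n$ grows.
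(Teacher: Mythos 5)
Your proof is correct, and its overall strategy coincides with the paper's: diagonalize all operators simultaneously in the tensor-product sine eigenbasis of the commuting $-\Lambda_k$ and bound the resulting scalar eigenvalue ratios, using $x_k:=\onetwelve h_k^2\mu_k<\tfrac13$ (equivalently $\lambda_{\max}^{(k)}<4/h_k^2$) so that $x_k/(1-x_k)<\tfrac12$; the cases $(\bar{s}_N,\bar{A}_N)$, $(\bar{s}_N,A_N)$ for $n=3$ and $(\bar{s}_N+R,\bar{A}_N)$ are then handled exactly as in the paper (telescoping of the product form, the domination $s_{N\hat i}\leq\bar s_{N\hat i}$, and $R\geq 0$). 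The one genuine divergence is the $n=2$ pair $(s_N,A_N)$: the paper identifies the \emph{sharp} constant $\alpha_h^2$ by showing the two-variable ratio is monotone in each eigenvalue and evaluating it at the corner $(\lambda_{\max}^{(1)},\lambda_{\max}^{(2)})$, which also lets it record that the bound $C_0=\tfrac43$ is asymptotically sharp; you instead use the algebraic identity
\begin{equation*}
\frac{\beta_1x_1(1-x_2)+\beta_2x_2(1-x_1)}{1-x_1-x_2}
=\beta_1x_1+\beta_2x_2+\frac{\beta_1x_1^2+\beta_2x_2^2}{1-x_1-x_2},
\end{equation*}
combined with $x_i^2<\tfrac13x_i$ and $1-x_1-x_2>\tfrac13$. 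Your route is fully explicit and avoids the (unwritten) verification of positive partial derivatives, at the price of not exhibiting the exact maximizer or the asymptotic sharpness of $\alpha_h^2$; since the lemma only asserts the upper bound with $C_0=\tfrac43$, this is a legitimate and arguably more self-contained alternative for that case.
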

\begin{proof}
Let $1\leq k\leq n$ and $\{\lambda_l^{(k)}:=\tfrac{4}{h_k^2}\sin^2\tfrac{\pi lh_k}{2X_k}\}_{l=1}^{N_k-1}$
be the collection of eigenvalues of the operator $-\Lambda_k$ in $H_h$, with the maximal of them
$\lambda_{\max}^{(k)}=\tfrac{4}{h_k^2}\sin^2\tfrac{\pi(N_k-1)}{2N_k}<\tfrac{4}{h_k^2}$.
The inequality $-\Lambda_k\leq\alpha_{1h}^2s_{kN}$ in $H_h$ is equivalent to the following inequality between the eigenvalues of these operators
\[
 \lambda_l^{(k)}\leq\alpha_{1h}^2\big(1-\onetwelve h_k^2\lambda_l^{(k)}\big),\ \ 1\leq l\leq N_k-1.
\]
Consequently the sharp constant is
\[
 \alpha_{1h}^2=\max_{1\leq l\leq N_k-1}\frac{\lambda_l^{(k)}}{1-\onetwelve h_k^2\lambda_l^{(k)}}
 =\frac{\lambda_{\max}^{(l)}}{1-\onetwelve h_k^2\lambda_{\max}^{(k)}}
 <\tfrac32\lambda_{\max}^{(k)}
 <6\tfrac{1}{h_k^2}.
\]
Herewith $\alpha_{1h}^2=6\tfrac{1}{h_k^2}\big(1+O\big(\tfrac{1}{N_k^2}\big)\big)$, thus the last bound is asymptotically sharp.

Similarly for $n=2$ the inequality $A_N\leq\alpha_h^2s_N$ in $H_h$ holds with
\begin{gather*}
 \alpha_h^2=\max_{1\leq k\leq N_1-1,\,1\leq l\leq N_2-1}
 \frac{\big(1-\onetwelve h_1^2\lambda_k^{(1)}\big)a_2^2\lambda_l^{(2)}
      +\big(1-\onetwelve h_2^2\lambda_l^{(2)}\big)a_1^2\lambda_k^{(1)}}
      {1-\onetwelve h_1^2\lambda_k^{(1)}-\onetwelve h_2^2\lambda_l^{(2)}}.
\end{gather*}
It is not difficult to check that the function under the $\max$ sign has the positive partial derivatives with respect to arguments
$\lambda_k^{(1)}$ and $\lambda_l^{(2)}$ on the natural intervals of their values and thus
\begin{gather*}
\alpha_h^2=\frac{\big(1-\onetwelve h_1^2\lambda_{\max}^{(1)}\big)a_2^2\lambda_{\max}^{(2)}
      +\big(1-\onetwelve h_2^2\lambda_{\max}^{(2)}\big)a_1^2\lambda_{\max}^{(1)}}
      {1-\onetwelve h_1^2\lambda_{\max}^{(1)}-\onetwelve h_2^2\lambda_{\max}^{(2)}}
      <2\big(a_1^2\lambda_{\max}^{(1)}+a_2^2\lambda_{\max}^{(2)}\big).
\end{gather*}
This implies
\eqref{BhAh} in the first case. The last bound is asymptotically sharp~ too.

\par Next, in virtue of the inequalities $s_{N\hat{i}}<\bar{s}_{N\hat{i}}$ for $n=3$ (see formula \eqref{bsNsN} for $n=2$) and
$-\Lambda_k<\tfrac32\lambda_{\max}^{(k)}s_{kN}$ in $H_h$, the following inequalities in $H_h$ hold:
\begin{gather*}
 A_N=-a_i^2s_{N\hat{i}}\Lambda_i
 <a_i^2\bar{s}_{N\hat{i}}\big(\tfrac32 \lambda_{\max}^{(i)}s_{iN}\big)
 =\tfrac32\big(a_i^2\lambda_{\max}^{(i)}\big)\bar{s}_N\ \ \text{for}\ \ n=3,
\\[1mm]
 \bar{A}_N=-a_i^2\bar{s}_{N\hat{i}}\Lambda_i
 <a_i^2\bar{s}_{N\hat{i}}\big(\tfrac32\lambda_{\max}^{(i)}s_{iN}\big)
 =\tfrac32\big(a_i^2\lambda_{\max}^{(i)}\big)\bar{s}_N
 \leq\tfrac32\big(a_i^2\lambda_{\max}^{(i)}\big)(\bar{s}_N+R)
\end{gather*}
for $n\geq 2$.
Therefore inequality \eqref{BhAh} has been proved in all the cases.
\end{proof}

Now we state a result on conditional stability in two norms for the constructed schemes.
\begin{theorem}
\label{theo:2}
Let $g=0$ in \eqref{hyperb2ibc} and $0<\ve_0<1$.
Let us consider schemes \eqref{num1eq}-\eqref{num1ic}, \eqref{num2eq}-\eqref{num2ic},
\eqref{num3eq}-\eqref{num3ic} and \eqref{num4eq}-\eqref{num4ic} under the condition
\begin{gather}
 C_0h_t^2\tfrac{a_i^2}{h_i^2}\leq 1-\ve_0^2
\label{condhth2}
\end{gather}
with the pairs of operators respectively
$(B_h,A_h)=(s_N,A_N)$ for $n=2$,
$(B_h,A_h)=(\bar{s}_N,A_N)$ for $n=3$,
$(\bar{s}_N,\bar{A}_N)$ for $n\geq 1$ (for $n=1$, this covers also scheme \eqref{num1eq1d}-\eqref{num1ic1d}) and $(\bar{s}_N+R,\bar{A}_N)$ for $n\geq 2$.
Here $C_0$ is the same as in Lemma~ \ref{lem:BhAh}.

\par Then the solutions to all the listed schemes satisfy the following bounds
\begin{gather}
\max_{1\leq m\leq M}
\big[\ve_0^2\|\bar{\delta}_tv^m\|_{B_h}^2
+\|\bar{s}_tv^m\|_{A_h}^2\big]^{1/2}
\nonumber\\[1mm]
\leq\big(\|v^0\|_{A_h}^2+\ve_0^{-2}\|B_h^{-1/2}u_{1N}\|_h^2\big)^{1/2}
+2\ve_0^{-1}\|B_h^{-1/2}f_N\|_{L_{h_t}^1(H_h)};
\label{energy est1N}
\end{gather}
the $f_N$-term
can be taken as
$2I_{h_t}^{M-1}\|A_h^{-1/2}\bar{\delta}_tf_N\|_h
+3\max\limits_{0\leq m\leq M-1}\|A_h^{-1/2}f^m\|_h$ as well, and
\begin{gather*}
\max_{0\leq m\leq M}\max\big\{\ve_0\|v^m\|_{B_h},\,\|I_{h_t}^m\bar{s}_tv\|_{A_h}\big\}
\\[1mm]
 \leq\|v^0\|_{B_h}
 +2\|A_h^{-1/2}u_{1N}\|_h
 +2\|A_h^{-1/2}f_N\|_{L_{h_t}^1(H_h)};
\label{energy est2N}
\end{gather*}
for $f_N=\delta_tg$, the $f_N$-term
can be replaced with $2\ve_0^{-1}I_{h_t}^{M}\|B_{h}^{-1/2}\big(g-s_tg^0\big)\|_h$.

Importantly, the both bounds hold for any free terms $u_{1N}\in H_h$ and $f_N$: $\{t_m\}_{m=0}^{M-1}\to H_h$ (not only for those defined in Lemmas \ref{lem:psi}-\ref{lem:psi0}).
\end{theorem}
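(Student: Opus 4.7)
The plan is to recognize each of the four listed schemes as a concrete instance of the abstract three-level method \eqref{3level sch1}--\eqref{3level sch2} with the specific weight $\sigma=\tfrac{1}{12}$, and then to invoke Theorem \ref{theo:1} from Section \ref{general3level} after verifying the structural hypotheses ($B_h=B_h^*>0$, $A_h=A_h^*>0$, and condition \eqref{stabcond}). For schemes \eqref{num1eq}--\eqref{num1ic}, \eqref{num2eq}--\eqref{num2ic} and \eqref{num3eq}--\eqref{num3ic} this identification is direct from the way the operators $s_N$, $\bar s_N$, $A_N$, $\bar A_N$ appear in front of $\Lambda_tv$ and $v$, with the factor $\tfrac{1}{12}h_t^2A_h$ matching $\sigma h_t^2 A_h$. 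For the splitting scheme \eqref{num4eq}--\eqref{num4ic}, I would use the identity $\bar B_N=\bar s_N+\tfrac{1}{12}h_t^2\bar A_N+R$ noted below \eqref{bBN}, so that the scheme rewrites in the form \eqref{3level sch1}--\eqref{3level sch2} with $B_h=\bar s_N+R$, $A_h=\bar A_N$ and again $\sigma=\tfrac{1}{12}$.

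The symmetry/positivity of $B_h$ and $A_h$ I would verify using that the operators $\Lambda_1,\ldots,\Lambda_n$ mutually commute and each is self-adjoint in $H_h$. Thus $s_N$, $\bar s_N$, $s_{N\hat i}$, $\bar s_{N\hat i}$, $R$ are self-adjoint, $A_N$ and $\bar A_N$ are self-adjoint as sums of products of commuting self-adjoint operators, and strict positivity of $-\Lambda_i$ gives $A_N,\bar A_N>0$ once the $s_{N\hat i}$, $\bar s_{N\hat i}$ factors are positive. Positivity of $s_N$ for $n\leq 2$, of $\bar s_N$ for all $n\geq 1$ (as the product of commuting positive $s_{kN}$ with $\lambda_{\min}(s_{kN})>\tfrac23$), and of the sum $\bar s_N+R$ then follow, the last using $R=R^*>0$ which is visible from the explicit formula \eqref{resid_oper} as a positive linear combination of products of pairwise commuting non-negative operators $-\Lambda_{i_j}$ and $s_{lN}$.

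With the pairings of $(B_h,A_h)$ just described, Lemma \ref{lem:BhAh} supplies $A_h\le\alpha_h^2B_h$ with the sharp factor $\alpha_h^2<6C_0a_i^2/h_i^2$. Since $\sigma=\tfrac{1}{12}<\tfrac14$, the condition \eqref{stabcond} reduces to $(\tfrac14-\tfrac{1}{12})h_t^2\alpha_h^2=\tfrac16 h_t^2\alpha_h^2\le 1-\varepsilon_0^2$, and combining this with Lemma \ref{lem:BhAh} gives the sufficient condition $C_0h_t^2a_i^2/h_i^2\le 1-\varepsilon_0^2$, which is exactly \eqref{condhth2}. Hence Theorem \ref{theo:1} applies and yields the two bounds \eqref{energy est1} and \eqref{energy est2} with $v^0,u_{1N},f_N$ in place of the abstract data.

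Finally, I would convert these to the form stated in Theorem \ref{theo:2}. For the strong energy bound, the inequality \eqref{ve_0ineq}, namely $\|w\|_{B_h}^2+(\sigma-\tfrac14)h_t^2\|w\|_{A_h}^2\ge\varepsilon_0^2\|w\|_{B_h}^2$, turns the left-hand side of \eqref{energy est1} into the quantity in \eqref{energy est1N}, while the right-hand side is taken verbatim from \eqref{energy est1}. For the weak energy bound, the same term on the left gives $\ge\varepsilon_0^2\|v^m\|_{B_h}^2$, and on the right we use the trivial upper estimate $\|v^0\|_{B_h}^2+(\sigma-\tfrac14)h_t^2\|v^0\|_{A_h}^2\le\|v^0\|_{B_h}^2$ valid since $\sigma<\tfrac14$, to replace the initial term by $\|v^0\|_{B_h}$. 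The variant of the $f_N$-term for $f_N=\delta_tg$ is transferred unchanged from Theorem \ref{theo:1}. The main subtlety I expect in the write-up is the splitting scheme: one has to make the (rather short) algebraic check that $B_h=\bar s_N+R$ is indeed symmetric positive definite and that $\bar B_N=B_h+\tfrac{1}{12}h_t^2A_h$, so that Theorem \ref{theo:1} applies with the very same $\sigma$ and the same stability condition as the non-split version.
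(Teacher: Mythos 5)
Your proposal is correct and follows essentially the same route as the paper: the authors' proof is a one-line reduction to Theorem \ref{theo:1} with $\sigma=\tfrac{1}{12}$, using Lemma \ref{lem:BhAh} to verify \eqref{stabcond} (indeed $(\tfrac14-\tfrac1{12})h_t^2\alpha_h^2=\tfrac16h_t^2\alpha_h^2<C_0h_t^2a_i^2/h_i^2$) and inequality \eqref{ve_0ineq} to pass to the stated norms. Your additional explicit checks (self-adjointness and positivity of the operator pairs, and the identity $\bar B_N=\bar s_N+\tfrac1{12}h_t^2\bar A_N+R$ for the splitting scheme) are exactly the details the paper leaves implicit, having established them in the discussion preceding the theorem.
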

\begin{proof}
The theorem follows immediately from the above general stability Theorem \ref{theo:1} applying assumption \eqref{stabcond} for $\sigma=1/12$, in virtue of inequality \eqref{ve_0ineq} and Lemma \ref{lem:BhAh}.
\end{proof}
\begin{corollary}
For the sufficiently smooth in $\bar{Q}_T$ solution $u$ to the IBVP
\eqref{hyperb2eq}-\eqref{hyperb2ibc}, $v^0=u_0$ on $\omega_h$ and under the hypotheses of Theorem \ref{theo:2} excluding $g=0$, for all the schemes listed in it, the following 4th order error bound in the strong energy norm holds
\[
 \max_{1\leq m\leq M}\big[\ve_0^2\|\bar{\delta}_t(u-v)^m\|_{B_h}^2+\|\bar{s}_t(u-v)^m\|_{A_h}^2\big]^{1/2}=O(|h|^4).
\]
\end{corollary}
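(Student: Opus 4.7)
The plan is to reduce the estimate to the stability Theorem \ref{theo:2} applied to the mesh error $w := u - v$. The boundary/initial assumptions $v^0 = u_0$ on $\omega_h$ and $v|_{\partial\omega_{\*h}} = g = u|_{\Gamma_T}$ give $w^0 = 0$ and $w|_{\partial\omega_{\*h}} = 0$, so $w(\cdot, t_m) \in H_h$ at each time level. Substituting the (sufficiently smooth) exact solution $u$ into each of the schemes \eqref{num1eq}-\eqref{num1ic}, \eqref{num2eq}-\eqref{num2ic}, \eqref{num3eq}-\eqref{num3ic} and \eqref{num4eq}-\eqref{num4ic} produces pointwise approximation discrepancies $\psi^m = O(|\*h|^4)$ in the bulk equation and $\psi^{(0)} = O(|\*h|^4)$ in the first-step equation. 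These follow from Lemma \ref{lem:psi} and Lemma \ref{lem:psi0} for scheme \eqref{num0eq}-\eqref{num0ic} together with the already-established $O(|\*h|^4)$ differences $(\bar s_N - s_N)y$, $(\bar A_N - A_N)y$ and $Ry$ on smooth $y$, which transfer the same approximation order to all the other listed schemes. Subtracting scheme from exact equation, $w$ satisfies a scheme of the same form with $v^0 = 0$, $u_{1N}$ replaced by $\psi^{(0)}$, $f_N$ replaced by $\psi$, and $f_N^0$ replaced by $0$.

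Theorem \ref{theo:2}, whose bound \eqref{energy est1N} holds for arbitrary right-hand data in $H_h$, then gives immediately
\[
\max_{1 \leq m \leq M}\bigl[\ve_0^2\|\bar\delta_t w^m\|_{B_h}^2 + \|\bar s_t w^m\|_{A_h}^2\bigr]^{1/2}
\leq \ve_0^{-1}\|B_h^{-1/2}\psi^{(0)}\|_h + 2\ve_0^{-1}\|B_h^{-1/2}\psi\|_{L_{h_t}^1(H_h)}.
\]
The stability condition \eqref{condhth2} yields $h_t \leq C_1 \min_i h_i$, so $|\*h| = O(|h|)$, and an $O(|\*h|^4)$ bound on the right-hand side will imply the asserted $O(|h|^4)$ estimate in the corollary.

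For this last step one exploits a uniform lower bound $B_h \geq cI$ with $c > 0$ independent of $\*h$ on each operator listed in the corollary: $\lambda_{\min}(s_N) > 1/3$ for $n = 2$, $\lambda_{\min}(\bar s_N) > (2/3)^n$ for any $n$ (both recorded explicitly in the paper), and $R \geq 0$ by the form \eqref{resid_oper}, so that $\bar s_N + R \geq (2/3)^n I$. This gives $\|B_h^{-1/2} y\|_h \leq c^{-1/2}\|y\|_h$; combining with $\|y\|_h \leq (\mathrm{vol}\,\Omega)^{1/2}\max_{\*k}|y_{\*k}|$ and the pointwise bounds on $\psi, \psi^{(0)}$ yields $\|B_h^{-1/2}\psi^{(0)}\|_h = O(|\*h|^4)$, and since the $L_{h_t}^1(H_h)$-norm has total mass $\leq T + \onefourth h_t$, also $\|B_h^{-1/2}\psi\|_{L_{h_t}^1(H_h)} = O(|\*h|^4)$. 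The only step beyond routine bookkeeping is this uniform positive definiteness of $B_h$, and I anticipate it to be the main (but modest) obstacle; all other ingredients are already in place in the preceding sections.
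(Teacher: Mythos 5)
Your proposal is correct and follows essentially the same route as the paper: the paper's own proof is a two-sentence sketch that applies the stability bound \eqref{energy est1N} to the error $r=u-v$ (with $r^0=0$, $r|_{\partial\omega_{\*h}}=0$), lets the $O(|\*h|^4)$ approximation errors play the role of $f_N$ and $u_{1N}$, and invokes $h_t=O(|h|)$ from \eqref{condhth2}. The only thing you add is the explicit verification that $B_h$ is uniformly positive definite (via $\lambda_{\min}(s_N)>1/3$ for $n=2$, $\lambda_{\min}(\bar s_N)>(2/3)^n$, and $R\geq 0$), which the paper leaves implicit as "standard" but which is indeed the right way to pass from pointwise residual bounds to the $\|B_h^{-1/2}\cdot\|_h$ norms.
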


The proof is standard (for example, see \cite{S77}) and follows from the stability bound \eqref{energy est1N} applied to the error $r:=u-v$ (herewith $r|_{\partial\omega_{\*h}}=0$, $r^0=0$).
The approximation errors play the role of $f_N^m$, $1\leq m\leq M-1$, and $u_{1N}$ in the equations of the schemes, and the above checked conclusion that they have the order $O(|\*h|^4)$ for all the listed schemes is essential, as well as $h_t=O(|h|)$ in Theorem \ref{theo:2}.

Notice that,
in the very particular case
$\tfrac{h_1}{a_1}=\ldots\tfrac{h_n}{a_n}=h_t$,
schemes \eqref{num1eq1d}-\eqref{num1ic1d} and \eqref{num4eq}-\eqref{num4ic} become \textit{explicit} (since then $\bar{B}_N=I$, see \eqref{bBN}) and, moreover, the latter one differs from the simplest explicit scheme only by the above derived approximations of the free terms in its equations.
Herewith, for scheme \eqref{num1eq1d}-\eqref{num1ic1d}, condition \eqref{condhth2} is valid with $C_0=1$ and only $\ve_0=0$ (actually, with some $0<\ve_0=\ve_0(h)<1$ as one can check).
But, for scheme \eqref{num4eq}-\eqref{num4ic} and $n\geq 2$, the condition even with $\ve_0=0$ fails; more careful analysis of inequality \eqref{BhAh} for this scheme still allows to improve the bound for $\alpha_h^2$ but not the drawn conclusion itself.
According to Remark \ref{rem1}, for scheme \eqref{num1eq1d}-\eqref{num1ic1d}, even in this particular case some stability bounds still hold.
The bounds contain terms of the following type
\begin{gather*}
 \|w\|_{0,\*h}^2=\big((I+\tfrac{h_i^2}{4}\Lambda_1)w,w\big)_h
 \geq\cos^2\tfrac{\pi(N_1-1)}{2N_1}\|w\|_h^2\ \ \forall w\in H_h.
\end{gather*}
Thus $\|w\|_{0,\*h}$ remains a norm in $H_h$ but clearly is no longer bounded from below by $\|w\|_h$ uniformly in $h$ (since the constant in the last inequality is sharp and has the order $O\big(\frac{1}{N_1^2}\big)$).

\par The explicit scheme for $n=1$ is very specific.
Its equations are rewritten using a 4-point stencil
simply as
\begin{gather}
 v_k^{m+1}=v_{k-1}^m+v_{k+1}^m-v_k^{m-1}+h_t^2f_{Nk}^m\ \ \text{on}\ \ \omega_{\*h},\ \ 1\leq m\leq M-1,
\label{explicit1}\\[1mm]
 v|_{\partial\omega_{\*h}}=g,\ \ v_k^1=\half(v_{k-1}^0+v_{k+1}^0)+h_tu_{1Nk}+\half h_t^2f_{Nk}^0\ \ \text{on}\ \ \omega_h.
\label{explicit2}
\end{gather}
For clarity, let us pass to the related Cauchy problem with any $k\in\mathbb{Z}$, $x_k=kh$, $h=h_1$, $a=a_1$ and the omitted boundary condition.
Then the following explicit formula holds
\begin{gather*}
 v_k^m=\half(v_{k-m}^0+v_{k+m}^0)
 +\sum\nolimits_{l\in I_k^{m}}h_tu_{1Nl}+\half h_t^2f_{Nl}^0
 +h_t^2\sum_{p=1}^{m-1}\sum\nolimits_{l\in I_k^{m-p}}f_{Nl}^p,
\end{gather*}
where $k\in\mathbb{Z}$, $1\leq m\leq M$ and $I_k^{m-p}$ is the set of indices
$\{k-(m-p-1),k-(m-p+1),\ldots,k+(m-p-1)\}$.
It can be verified most simply by induction with respect to $m$.
Notice that all the mesh nodes lie on the characteristics $x-x_k=\pm a_1t$ of the equation.
Of course, the stability of the scheme can be directly proved applying this formula.
\par Let us take $v^0=u_0$ and reset $u_{1Nk}=\frac{1}{2h}\int_{x_{k-1}}^{x_{k+1}}u_1(x)\,dx$ and
\[
 f_{Nk}^0=\frac{1}{hh_t}\int_{T_k^1}f(x,t)\,dxdt,\ \ f_{Nk}^m=\frac{1}{2hh_t}\int_{R_k^m}f(x,t)\,dxdt;
\]
here $T_k^m$ and $R_k^m$ are the triangle and rhomb with the vertices $\{(x_{k\pm m},0)$, $(x_k,t_m)\}$ and
$\{(x_{k\pm 1},t_m),(x_k,t_{m\pm 1})\}$.
Then the above formula for $v_k^m$ takes the form
\[
 v_k^m=\half\big(u_0(x_{k-m})+u_0(x_{k+m})\big)
 +\tfrac{1}{2a_1}\int_{x_{k-m}}^{x_{k+m}}u_1(x)\,dx
 +\tfrac{1}{2a_1}\int_{T_k^m}f(x,t)\,dxdt,
\]
thus at the mesh nodes it reproduces the classical d'Alembert formula for the solution $u$ to the Cauchy problem for the 1D wave equation, where the approximate and exact solution \textit{coincide}: $v_k^m\equiv u(x_k,t_m)$ for any $k\in\mathbb{Z}$ and $0\leq m\leq M$.
Concerning exact schemes, see also \cite{LMP2016}.

\section{The case of non-uniform rectangular meshes}
\label{nonunif_mesh}
\setcounter{equation}{0}
\setcounter{lemma}{0}
\setcounter{theorem}{0}

This section is devoted to a generalization to the case of non-uniform rectangular meshes.
Let $1\leq k\leq n$.
Define the general non-uniform meshes $0=x_{k0}<x_{k1}<\ldots<x_{kN_k}=X_k$ in $x_k$ with the steps $h_{kl}=x_{kl}-x_{k(l-1)}$ and
$\overline\omega_{h_t}$ with the nodes $0=t_0<t_1<\ldots<t_M=T$ and steps $h_{tm}=t_m-t_{m-1}$.
Let $\omega_{hk}=\{x_{kl}\}_{l=1}^{N_k-1}$.
We set
\[
 h_{k+,l}=h_{k(l+1)},\ \ h_{*k}=\half(h_k+h_{k+}),\ \ h_{t+,m}=h_{t(m+1)},\ \ h_{*t}=\half(h_t+h_{t+})
\]
as well as $h_{k\max}=\max_{1\leq l\leq N_k} h_{kl}$ and $h_{t\max}=\max_{1\leq m\leq M} h_{tm}$.
Define the difference operators
\begin{gather*}
 \delta_kw_l=\tfrac{1}{h_{k+,l}}(w_{l+1}-w_l),\ \
 \bar{\delta}_kw_l=\tfrac{1}{h_{kl}}(w_l-w_{l-1}),\ \ \Lambda_kw=\tfrac{1}{h_{*k}}(\delta_kw-\bar{\delta}_kw),
\\[1mm]
 \delta_ty^m=\tfrac{1}{h_{t+,m}}(y^{m+1}-y^m),\ \
 \bar{\delta}_ty^m=\tfrac{1}{h_{tm}}(y^m-y^{m-1}),\ \ \Lambda_ty=\tfrac{1}{h_{*t}}(\delta_ty-\bar{\delta}_ty),
\end{gather*}
where $w_l=w(x_{kl})$ and $y^m=y(t_m)$.
The last four operators generalize those defined above so that their notation is the same.

\par We extend the above technique based on averaging equation \eqref{hyperb2eq} and generalize the above average in $x_k$:
\begin{gather*}
 q_kw(x_{kl})=\frac{1}{h_{*k,l}}\int_{x_{k(l-1)}}^{x_{k(l+1)}}w(x_k)e_{kl}(x_k)\,dx_k,
\\[1mm]
e_{kl}(x_k)=\tfrac{x_k-x_{k(l-1)}}{h_{kl}}\,\ \text{on}\ [x_{k(l-1)},x_{kl}],\
e_{kl}(x_k)=\tfrac{x_{k(l+1)}-x_k}{h_{k+,l}}\,\ \text{on}\ [x_{kl},x_{k(l+1)}].
\end{gather*}

For a function $w(x_k)$ smooth on $[0,X_k]$, formula \eqref{qklambdak} remains valid and
\begin{gather*}
 q_kw=w+q_k\rho_{k1}(\partial_kw),
\nonumber\\[1mm]
 q_kw=w+\tfrac13(h_{k+}-h_k)\partial_kw+\onetwelve\big[(h_{k+})^2-h_{k+}h_k+h_k^2\big]\partial_k^2w+q_k\rho_{k3}(\partial_k^3w)
\label{qkexpansion}
\end{gather*}
on $\omega_{hk}$,
and the first bound \eqref{resid_bound} remains valid for $s=1,3$ with $h_k$ replaced with $h_{*k}$, see also \eqref{taylor_residual},
that follows from Taylor's formula after calculating the arising integrals over $[x_{k(l-1)},x_{k(l+1)}]$.
Due to Taylor's formula we also have
\begin{gather*}
 \partial_kw=\half(\bar{\delta}_kw+\delta_kw)-\tfrac14(h_{k+}-h_k)\partial_k^2w+\rho_{k}^{(1)}(\partial_k^3w),
 \partial_k^2w=\Lambda_kw+\rho_{k3}^{(2)}(\partial_k^3w),
\\[1mm]
 |\rho_{k}^{(s)}(\partial_k^3w)|\leq c^{(s)}h_{*k}^{3-2(s-1)}\|\partial_k^3w\|_{C(I_{kl})},\ \ s=1,2,
\end{gather*}
thus the second expansion for $q_kw$
implies that
\begin{gather*}
 q_kw=s_{kN}w+\tilde{\rho}_{k3}(\partial_k^3w),\ \
 |\tilde{\rho}_{k3}(\partial_k^3w)|\leq \tilde{c}_3h_{*k}^3\|\partial_k^3w\|_{C(I_{kl})},
\\[1mm]
 s_{kN}:=I+\tfrac13(h_{k+}-h_k)\big[\half(\bar{\delta}_k+\delta_k)-\tfrac14(h_{k+}-h_k)\Lambda_k\big]
\nonumber\\[1mm]
 +\onetwelve\big[(h_{k+})^2-h_{k+}h_k+h_k^2\big]\Lambda_k
 =I+\tfrac16(h_{k+}-h_k)(\bar{\delta}_k+\delta_k)+\onetwelve h_kh_{k+}\Lambda_k,
\end{gather*}
i.e., $s_{kN}=I+\onetwelve(h_{k+}\beta_k\delta_k-h_k\alpha_k\bar{\delta}_k)$
or, in the averaging form,
\begin{gather*}
 s_{kN}w_l=\onetwelve(\alpha_{kl}w_{l-1}+10\gamma_{kl}w_l+\beta_{kl}w_{l+1}),
\\
 \alpha_k=2-\tfrac{h_{k+}^2}{h_kh_{*k}},\ \beta_k=2-\tfrac{h_k^2}{h_{k+}h_{*k}},\ \gamma_k=1+\tfrac{(h_{k+}-h_k)^2}{5h_k h_{k+}},\ \alpha_k+10\gamma_k+\beta_k=12;
\end{gather*}
all the presented formulas are valid on $\omega_{hk}$.
The operator $s_{kN}$ generalizes one defined above.
Its another derivation was originally given in \cite{JIS84}, see also \cite{ChS18,RCM14}.
Recall that the natural property $\alpha_{kl}\geq 0$ and $\beta_{kl}\geq 0$ (not imposed below) is equivalent to the rather restrictive condition
on the ratio of the adjacent mesh steps
\begin{equation}
0.618\approx \tfrac{2}{\sqrt{5}+1}\leq\tfrac{h_{k(l+1)}}{h_{kl}}\leq\tfrac{\sqrt{5}+1}{2}\approx 1.618.
\label{step ratio cond}
\end{equation}
\par On $\omega_{h_t}$, the average $q_tw=q_{n+1}w$ is defined similarly, and thus
\begin{gather*}
 q_tw=s_{tN}w+\tilde{\rho}_{t3}(\partial_t^3w),\ \
|\tilde{\rho}_{t3}(\partial_t^3w)|\leq \tilde{c}_3h_{*t}^3\|\partial_t^3w\|_{C[t_{m-1},t_{m+1}]}
\end{gather*}
with $s_{tN}=I+\onetwelve(h_{t+}\beta_t\delta_t-h_t\alpha_t\bar{\delta}_t)$ or, in the averaging form,
\[
 s_{tN}y=\onetwelve(\alpha_t\check{y}+10\gamma_ty+\beta_t\hat{y}),\,
 \alpha_t=2-\tfrac{h_{t+}^2}{h_th_{*t}},\ \beta_t=2-\tfrac{h_t^2}{h_{t+}h_{*t}},\,
 \gamma_t=1+\tfrac{(h_{t+}-h_t)^2}{5h_t h_{t+}}.
\]

\par Let $\omega_{h}=\omega_{h1}\times\ldots\times\omega_{hn}$.
Formula \eqref{avereq} for $u$ remains valid
and implies
\[
 \bar{s}_N\Lambda_tu-a_i^2\bar{s}_{N\hat{i}}s_{tN}\Lambda_iu=\bar{q}q_tf+O(\*h_{\max}^3)\ \ \text{on}\ \ \omega_{\*h},
\]
where $\*h_{\max}=\max\{h_{1\max},\ldots,h_{n\max},h_{t\max}\}$.
Formula \eqref{avereq_0} for $u$ remains valid as well.
It involves only two first time levels thus easily covers the case of the non-uniform mesh in $t$ and implies now
\[
 \big(\bar{s}_N+\tfrac{h_{t1}^2}{12}a_i^2\bar{s}_{N\hat{i}}\Lambda_i\big)(\delta_tu)^0
 +\tfrac{h_{t1}}{2}a_i^2\bar{s}_{N\hat{i}}\Lambda_iu_0
 =\bar{q}u_1+\tfrac{h_{t1}^2}{12}a_i^2\bar{s}_{N\hat{i}}\Lambda_iu_1+\bar{q}q_tf^0+O(\*h_{\max}^3)
\]
on $\omega_h$,
where $q_ty^0$ is given by
formula \eqref{qty0}
with $h_{t1}$ in the role of $h_t$.

\par Owing to the above formulas, see also Remark \ref{rem:good-scheme}, the last two formulas with $u$ lead us to the generalized scheme \eqref{num3eq}-\eqref{num3ic}:
\begin{gather}
 \tfrac{1}{h_{*t}}\big[\big(\bar{s}_N+\tfrac{h_{*t}h_{t+}}{12}\beta_t\bar{A}_N\big)\delta_tv
 -\big(\bar{s}_N+\tfrac{h_{*t}h_t}{12}\alpha_t\bar{A}_N\big)\bar{\delta}_tv\big]
 +\bar{A}_Nv=\bar{s}_Ns_{tN}f,
\label{num3eq nonuni}\\
\hspace{-8pt} v|_{\partial\omega_{\*h}}=g,\ \big(\bar{s}_N+\tfrac{h_{t1}^2}{12}\bar{A}_N\big)(\delta_tv)^0
 +\tfrac{h_{t1}}{2}\bar{A}_Nv_0
 =(\bar{s}_N-\tfrac{h_{t1}^2}{12}\bar{A}_N)u_1+\tfrac{h_{t1}}{2}f_N^0
\label{num3ic nonuni}
\end{gather}
with $f_N^0=\bar{s}_Nf_0+\tfrac{h_{t1}}{3}(\delta_tf)^0$,
where equations are valid respectively on $\omega_{\*h}$ and $\omega_h$ and have the approximation errors of the order $O(\*h_{\max}^3)$.
\par For the uniform mesh in $t$, the left-hand side of \eqref{num3eq nonuni} takes the previous form whereas the term $\bar{s}_Ns_{tN}f$ can be simplified keeping the same order of the approximation error:
\begin{gather}
 (\bar{s}_N+\onetwelve h_t^2\bar{A}_N)\Lambda_tv+\bar{A}_Nv=(\bar{s}_N+\onetwelve h_t^2\Lambda_t)f.
\label{num3eq nonuni in x}
\end{gather}

\par The splitting version of equation \eqref{num3eq nonuni} can be got by replacing the operators in front of $\delta_tv$ and $\bar{\delta}_tv$ by the operators of the form
\[
 \bar{B}_N=(s_1+\onetwelve h_{*t}\tilde{h}_t\sigma_ta_1^2\Lambda_1)\ldots(s_n+\onetwelve h_{*t}\tilde{h}_t\sigma_ta_n^2\Lambda_n),
\]
where respectively $\tilde{h}_t=h_{t+}$ and $\sigma_t=\beta_t$, or $\tilde{h}_t=h_t$ and $\sigma_t=\alpha_t$.
Since
\[
 \bar{B}_N=\bar{s}_N+\onetwelve h_{*t}\tilde{h}_t\sigma_ta_i^2\bar{s}_{N\hat{i}}\Lambda_i+R,
\]
where the operator $R$ satisfies formula \eqref{resid_oper} with $h_t^2$ replaced with $h_{*t}\tilde{h}_t\sigma_t$,
this replacement conserves the approximation error of the order $O(\*h_{\max}^3)$.
The splitting version of equation \eqref{num3ic nonuni} is got simply by replacing $\bar{s}_N+\tfrac{h_{t1}^2}{12}\bar{A}_N$
with the above operator \eqref{bBN} with $h_{t1}$ in the role of $h_t$.

\par One can check also that the approximation errors still has the 4th order $O(\*h_{\max}^4)$ for smoothly varying non-uniform meshes, cp. \cite{Z15}, provided that, for example, $f_N^0=\bar{s}_Nf^0-f^0+f_{dh_t}^{(0)}$.

\par Here we do not touch the stability study in the case of the non-uniform mesh (even only in space) but this is noticeably more cumbersome like in \cite{Z15} (since the operator $s_{kN}$ is not self-adjoint any more) and, moreover,
imposes stronger conditions on $h_t$,
see also \cite{ZC18,ZC20}.

\section{Numerical experiments}
\label{numerexperiments}
\setcounter{equation}{0}
\setcounter{lemma}{0}
\setcounter{theorem}{0}

\textbf{\ref{numerexperiments}.1.} In the  IBVP \eqref{hyperb2eq}-\eqref{hyperb2ibc} in the 1D case, we now take $\Omega:=(-X/2,X/2)$ and rewrite the boundary condition as $u|_{x=-X/2}=g_0(t)$ and $u|_{x=X/2}=g_1(t)$, $t\in(0,T)$.
We intend to analyze the practical error orders $\gamma_{pr}$ of $r=u-v$ in three uniform in time mesh norms
\begin{equation}
 \max_{0\leq m\leq M}\|r^m\|_h,\ \
 \max_{0\leq m\leq M,\,0\leq k\leq N}|r_k^m|,\ \
\max_{1\leq m\leq M}\max\big\{\|\bar{\delta}_tr^m\|_h,\,\|\bar{\delta}_1r^m\|_{\tilde{h}}\big\},
\label{3norms}
\end{equation}
which below are denoted respectively as $L_h^2$, $C_h$ and $\mathcal{E}_h$ (the 2nd and 3rd norms are the uniform and strong energy-type ones).
Here $\|w\|_{\tilde{h}}=\big(h\sum_{k=1}^{N}w_k^2\big)^{1/2}$ and $N=N_1$.
The respective expected theoretical error orders $\gamma_{th}$ are
\begin{gather}
 \min\big\{\tfrac45\alpha,4\big\},\,\ \alpha\geq 0;\ \
 \tfrac45(\alpha-\tfrac12),\,\
\tfrac12<\alpha\leq\tfrac{11}{2};\ \
 \tfrac45(\alpha-1),\,\ 1\leq\alpha\leq 6
\label{gammath}
\end{gather}
(in the spirit of \cite{BTW75}),
where $\alpha$ is the parameter defining the weak smoothness of the data, see details below
(concerning the first order, for $\alpha\leq 1$, it should refer to the continuous $L^2$ norm rather than the mesh one but that we will ignore).
The proof of the first order in the case $u_1=f=0$ see in \cite{J94}.
For comparison, recall that for the 2nd approximation order methods the corresponding
theoretical
error orders $\gamma_{th}^{(2)}$ are
\begin{equation}
 \min\big\{\tfrac23\alpha,2\big\}, \alpha\geq 0;\
 \min\big\{\tfrac23(\alpha-\tfrac12),2\big\}, \alpha>\tfrac12;\
 \min\big\{\tfrac23(\alpha-1),2\big\}, \alpha\geq 1,
\label{gammath2}
\end{equation}
according to \cite{Z94}; recall that the middle error order is derived from two other ones.
These orders also have recently been confirmed practically
in \cite{ZKMMA2018}.

\par Let $P_0(x)=(\sgn x+1)/2$ be the Heaviside-type function, $P_1(x)=1-2|x|$,
$P_k(x)=(\sgn x)(2x)^k$ ($k\geq 2$) and $Q_l(t)=                                                                           P_0(t-t_*)(t-t_*)^l$ ($l\geq 0$ and $0<t_*<T$) be piecewise-polynomial functions.
For uniformity, we also set $P_{-1}(x)=\delta(x)$ and $Q_{-1}(t)=\delta(t-t_*)$ as the Dirac delta-functions  concentrated at $x=0$ and $t=t_*$.
We put $X=T=1$.

\par We consider six typical Examples $E_{\alpha}$, $\alpha=1/2,3/2,\ldots,11/2$, of non-smooth data supplementing the study in \cite{ZKMMA2018}.
The initial functions $u_0=P_{[\alpha]}$ and $u_1=c_1P_{[\alpha]-1}$
are piecewise-polynomial functions of the degree $[\alpha]$ and $[\alpha]-1$ respectively, with a unique singularity point $x=0$, excluding the case $[\alpha]=0$ for $u_1$, where $u_1(x)=c_1\delta(x)$.
Thus $u_0$ belongs to the Nikolskii space $H_2^\alpha(\Omega)$ \cite{N75} (and to the Sobolev-Slobodetskii space $W_2^{\alpha-\ve}(\Omega)$, $0<\ve<1/2$), and $u_1\in H_2^{\alpha-1}(\Omega)$ (for $\alpha>1$).

\par The free term $f(x,t)=c_2P_{-1}(x)Q_{-1}(t)=c_2\delta(x,t-t_*)$
is concentrated at $(x,t)=(0,t_*)$ for $\alpha=1/2$,
or has the form $f(x,t)=f_1(x)f_2(t)=c_2P_0(x)Q_{-1}(t)$ for $\alpha=3/2$,
or the form of two such type summands $f(x,t)=c_2P_0(x)Q_{[\alpha]-2}(t)+c_3P_1(x)Q_{[\alpha]-3}(t)$
for $\alpha\geq 5/2$.
The term $f_1$ is piecewise-constant (the case $\alpha_1=1/2$) for $\alpha\geq 3/2$ or also piecewise-linear (the case $\alpha_1=3/2$) for $\alpha\geq 5/2$, with a unique singularity point $x=0$.
Respectively the term $f_2(t)=\delta(t-t_*)$
for $\alpha=3/2$ or $f_2(t)=Q_{\alpha_2}(t)$ is a piecewise-polynomial function of the degree $\alpha_2=\alpha-2-\alpha_1=[\alpha]-2-[\alpha_1]$ for $\alpha\geq 5/2$, with a unique singularity point $t=t_*$.
Recall that, for $\alpha_2>0$, such $f_2$ belongs to the Sobolev-Nikolskii space $WH_1^{\alpha_2}(0,T)$
(for example, see \cite{Z94}), though not to the less broad Sobolev space $W_1^{\alpha_2}(0,T)$.
Thus $f$ itself or its both summands has the so called weak \textit{dominated mixed smoothness} of the order $\alpha_1$ in $x$ and $\alpha_2$ in $t$, with $\alpha_1+\alpha_2=\alpha-1$.
Recall that this property is much broader than the standard weak smoothness of the order $\alpha-1$ in both $x$ and $t$ in $L^2(Q)$; in particular, the case of $f$ discontinuous in $x$ is covered for \textit{any} considered $\alpha$.
\par Here $(c_1,c_2)=(0.4,0.4),(1.9,1.1)$
and
$(c_1,c_2,c_3)=
(0.58,2.1,2.3),
(2.8,6.8$, $7.3)$,
$(3.7,13,31)$,
$(4.6,24,51)$
for $\alpha=\frac12,\frac32,\ldots,\frac{11}{2}$ respectively.
We use these multipliers to make the contributions to $r(\cdot,T)$ due to $u_0$, $u_1$ and $f$ of the similar magnitude and thus
significant.
\par We also take smooth $g_0$ and $g_1$ (not affecting $\gamma_{pr}$) to simplify the explicit forms of $u$ (which we omit here) based on the d'Alembert formula.
Namely, we set
$g_0=0$ and $g_1(t)=(c_1t)^k$ for $\alpha=\frac12,\frac32$;
$g_0=(-1)^{k}(-g_1^{(0k)}+c_1g_1^{(1k)})$ and $g_1=g_1^{(0k)}+c_1g_1^{(1k)}$ for $\alpha\geq\frac52$, where $k=[\alpha]$ and
\begin{gather*}
 g_1^{(0k)}(t)=\half\big[(1-2at)^k+(1+2at)^k\big],\ \ k\geq 2,
\\
 g_1^{(1k)}(t)=0,\ \ k=2,\ \
 g_1^{(1k)}(t)=\tfrac{1}{4ak}\big[(1+2at)^k-(1-2at)^k\big],\ \ k\geq 3.
\end{gather*}
\par The properties of $u$ in Example $E_{\alpha}$ have been described in \cite{ZKMMA2018} or are similar.
Recall that, for example, $u$ is piecewise-constant and discontinuous on $\bar{Q}$ for $\alpha=\frac12$, or $u$ is piecewise-linear with  discontinuous piecewise-constant derivatives on $\bar{Q}$ for $\alpha=\frac32$, etc.
The straight singularity lines are characteristics and $t=t_*$.
Notice that $u$ is not the classical solution \textit{for any} $\alpha$ but is strong one for $\alpha\geq\frac72$
and one of several weak solutions for $\alpha\leq\frac52$, see details in \cite{ZKMMA2018}
(but note that, for $f(x,t)=P_1(x)\delta(t-t_*)$, $\alpha=\frac52$, the jump of $\partial_tu$ across $t=t_*$ was not taken into account there).

\par We set $v^0=u_0$;
$u_{1N}=q_1u_1$ for $\alpha\leq\frac32$, or as in \eqref{u1N} for $\alpha\geq\frac52$,
and $f_N^m=(q_1f_1)q_tf_2^m$ on $\omega_{\*h}$, see Remark \ref{rem:nonsmooth_f}.
For $x_k\in\omega_h$ and even $N$, we have
$q_1\delta(\cdot)_k=\tfrac{1}{h}$ for $x_k=0$, or $q_1\delta(\cdot)_k=0$ otherwise, and $(q_1P_0)_k=P_0(x_k)$.
Also, if $t_*=t_{m_*}\in\omega_{h_t}$ and $1\leq k\leq 5$, then
$(q_tQ_k)^m=(s_{tN}Q_k)^m$ for $t_m\in\omega_{h_t}$,
$m\neq m_*$, or
$(q_tQ_k)^{m_*}=\tfrac{\tau^k}{(k+1)(k+2)}$.
We choose $a=\frac{1}{\sqrt5}$, $t_*=\frac{T}{2}$ and $\tau=h$ (so the mesh is not adjusted to the characteristics).

\par To identify error orders more reliably, we compute the errors for respectively $N=200,400$, $\ldots,N_{\max}$,
where $N_{\max}=3200,2000,800$ respectively for $\frac12\leq\alpha\leq\frac52,\alpha=\frac72,\frac92$; also $N=200,300,\ldots,N_{\max}$ with $N_{\max}=600$ for $\alpha=\frac{11}{2}$
($N_{\max}$ is lesser for $\alpha\geq\frac72$ to avoid an impact of the round-off errors on ${\gamma_{pr}}$).
We plot graphs of $\log_{10}\|r\|$ versus $\log_{10}N$,
where $\|r\|$ is each of the three norms \eqref{3norms}, and seek the almost linear dependence between them by the least square method.
Thus we calculate the dependence $\|r\|\approx c_0h^{\gamma_{pr}}=c_0(\frac{X}{N})^{\gamma_{pr}}$.

For $\alpha=\frac32,\frac52,\frac72,\frac{9}{2}$ and the extended set $N=200,400,\ldots,3200$,
we present
$\mathcal{E}$,
$C_h$, $L_h^2$-norms of the error denoted respectively by $\triangle, \Box, \diamondsuit$ on Figs. \ref{Fig1}-\ref{Fig2}.
Notice the abrupt decrease of the error range as $\alpha$ grows.
We also observe the slight oscillation of the data for $\alpha=\frac32$ that is an exception
(they also present for $\alpha=\frac12$);
instead, the linear behavior is typical for other $\alpha$ and the values $200\leq N\leq N_{\max}$.
The slight growth of $L_h^2$-norm for $\alpha=\frac72$ and much more significant growth of all the norms for $\alpha=\frac{9}{2}$ as $N$ increases reflect the impact of the round-off errors; the value of $N$ when the error begins to increase depends on the norm.
For $\alpha=\frac{11}{2}$ the situation is even more strong (not presented).
\begin{figure}[h]
\begin{minipage}[t]{0.48\linewidth}
\centering
\includegraphics[width=1\linewidth]
{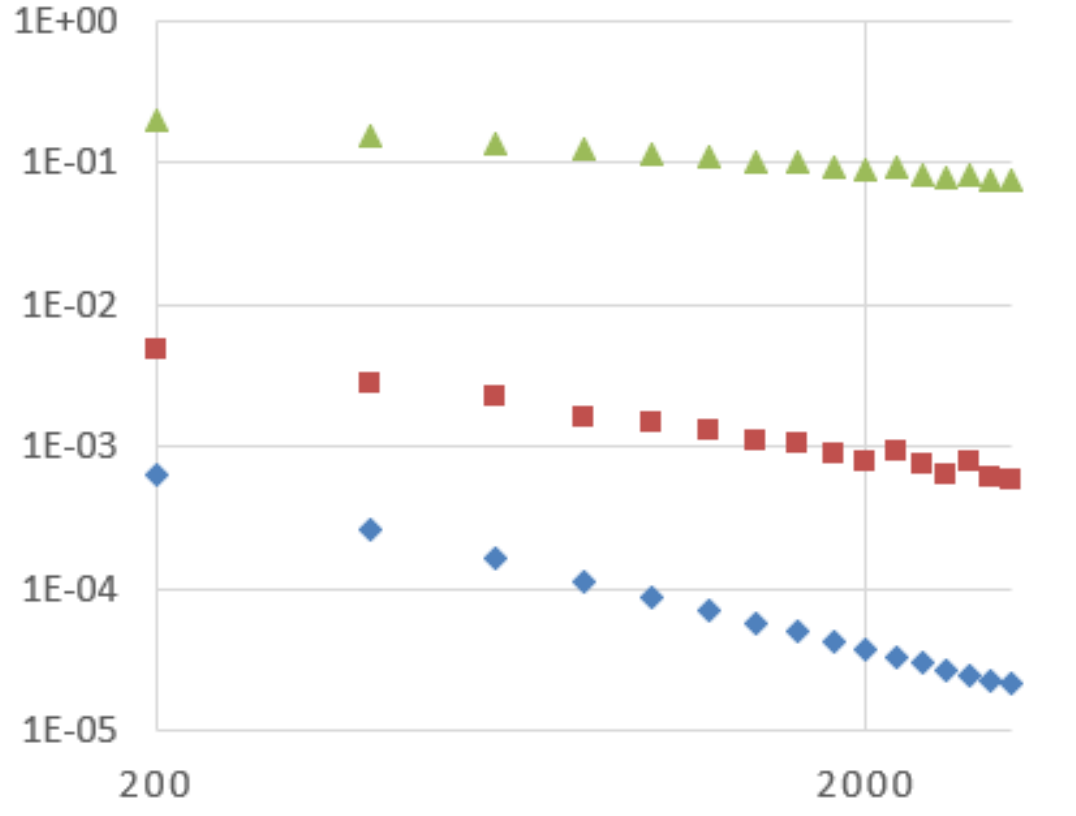}
 $a)\ \alpha=3/2$
\end{minipage}
\hfill
\begin{minipage}[t]{0.48\linewidth}
\centering
\includegraphics[width=1\linewidth]
{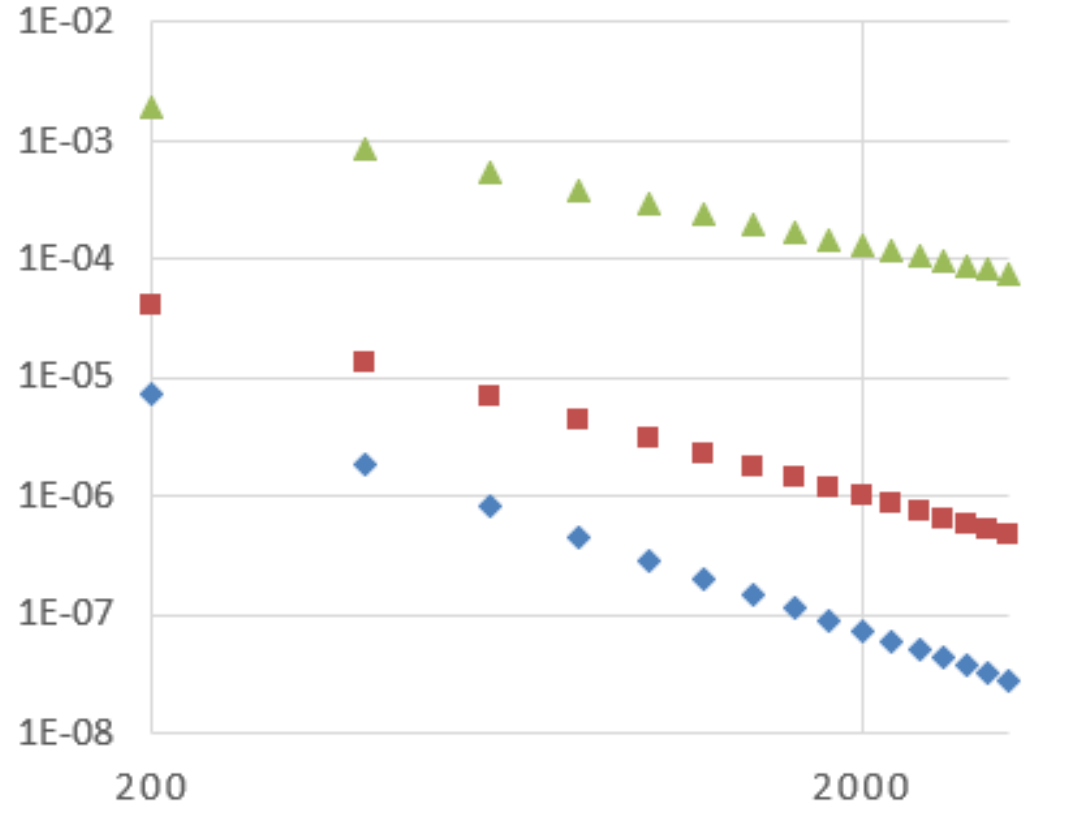}
 $b)\ \alpha=5/2$
\end{minipage}
\caption{Examples $E_{3/2}$ (left) and $E_{5/2}$ (right): $\mathcal{E},C_h,L_h^2$-norms of the error
denoted respectively by
$\triangle, \Box, \diamondsuit$, for $N=200,400,\ldots,3200$}
\label{Fig1}
\end{figure}
\begin{figure}[h]
\begin{minipage}[t]{0.48\linewidth}
\centering
\includegraphics[width=1\linewidth]
{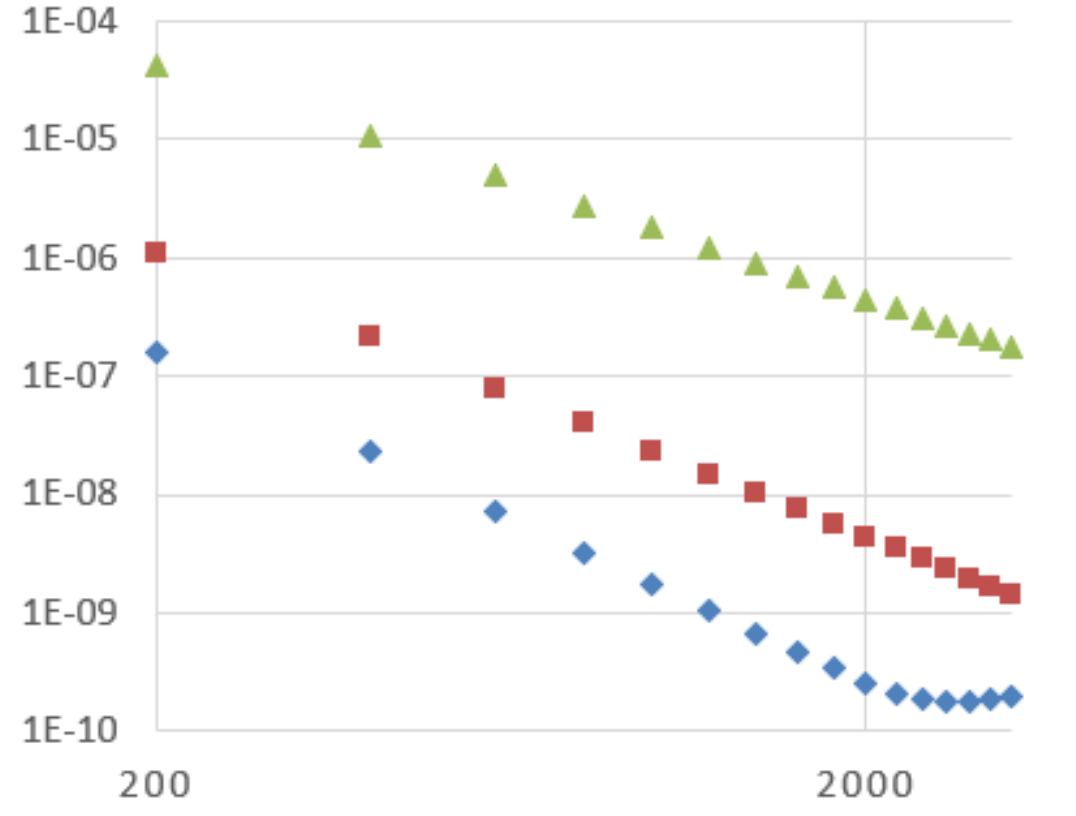}
 $a)\ \alpha=7/2$
\end{minipage}
\hfill
\begin{minipage}[t]{0.48\linewidth}
\centering
\includegraphics[width=1\linewidth]
{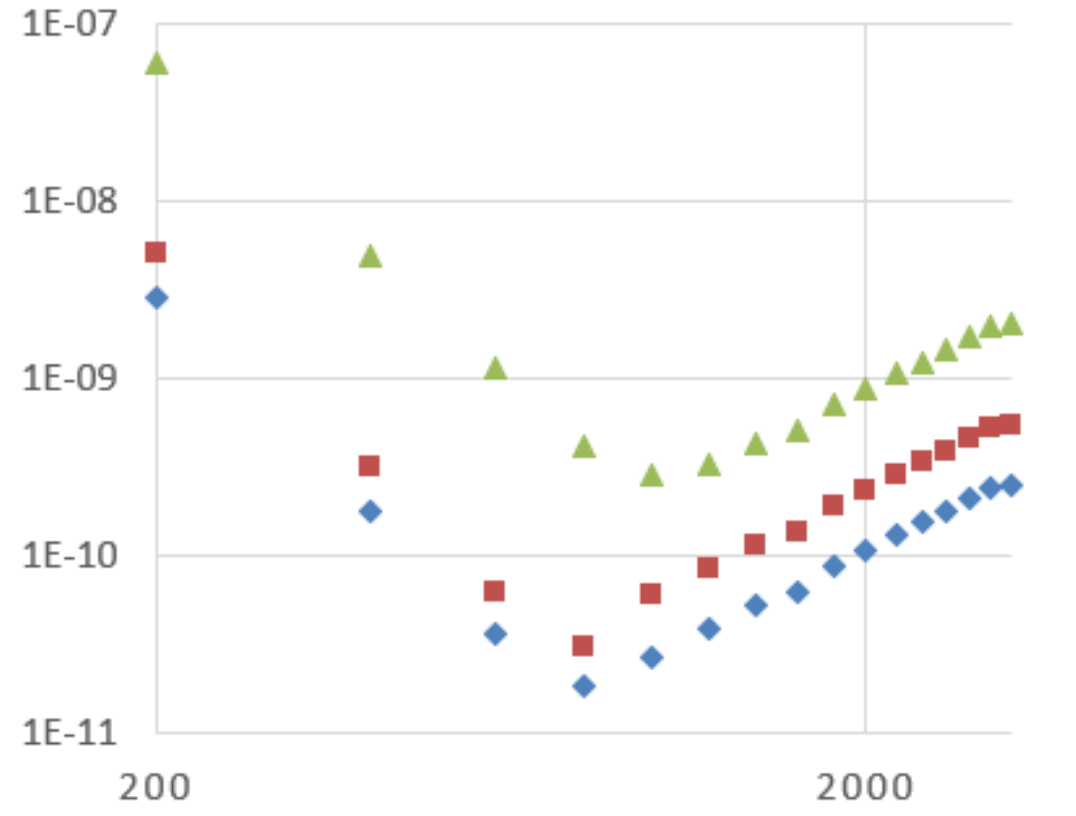}
 $b)\ \alpha=9/2$
\end{minipage}
\caption{Examples $E_{7/2}$ (left) and $E_{9/2}$ (right): $\mathcal{E},C_h,L_h^2$-norms of the error
denoted respectively by
$\triangle, \Box, \diamondsuit$, for $N=200,400,\ldots,3200$}
\label{Fig2}
\end{figure}

\par The computed $c_0$ and $\gamma_{pr}$ together with the respective theoretical orders $\gamma_{th}$ and $\gamma_{th}^{(2)}$, see \eqref{gammath}-\eqref{gammath2},
and the error norms $\|r_{N}\|$ and $\|r_{N}^{(2)}\|$ for $N=200,N_{\max}$ are collected in Table \ref{table:practical_error}.
For more visibility, here we include the error norms $\|r_{N}^{(2)}\|$ for the standard second order scheme
like \eqref{num1eq1d}-\eqref{num1ic1d} but with the multiplier $-\sigma a_1^2h_t^2$ substituted for $\tfrac{1}{12}(h_1^2-a_1^2h_t^2)$,
with the weight $\sigma=\tfrac{1}{2}$, the same $v^0$ and $f_N$ as well as $u_{1N}=q_1u_1$ for $\alpha\leq\frac32$, or $u_{1N}=u_1$ for $\alpha\geq\frac52$.
\begin{table}[t]
\caption{Numerical results for the uniform mesh}
\label{table:practical_error}
\vskip 2mm
\tabcolsep=0.15cm
\begin{tabular}{cccccccccc}
\toprule 
 $\alpha$ & $\|\cdot\|$ & $c_0$ & $\gamma_{pr}$ & $\gamma_{th}$ & $\gamma^{(2)}_{th}$ & $\|r_{200}\|$ & $\|r_{N_{\max}}\|$
 & $\|r_{200}^{(2)}\|$ & $\|r_{N_{\max}}^{(2)}\|$\\[1mm]
\toprule 
$1/2$ & $L^2_h$ & 0.514 & 0.406 &  0.4 & $1/3$ & $.595E$$-1$  & $.192E$$-1$       & $.943E$$-1$ & $.373E$$-1$\\
\toprule 
$ $ & $\mathcal{E}_h$ & 1.24  & 0.346 & 0.4 & $1/3$ & $.201E$$-0$  & $.751E$$-1$  & $.404E$$-0$ & $.172E$$-0$\\
$3/2$ & $C_h$     & 0.245 & 0.742 & 0.8 & $2/3$ & $.475E$$-2$  & $.582E$$-3$      & $.180E$$-1$ & $.294E$$-2$\\
$ $ & $L^2_h$         & 0.393 & 1.217 & 1.2 & 1     & $.635E$$-3$  & $.215E$$-4$  & $.272E$$-2$ & $.168E$$-3$\\
\toprule 
      $ $ & $\mathcal{E}_h$ & 0.924 & 1.167 & 1.2 & 1 & $.188E$$-2$  & $.745E$$-4$ & $.965E$$-2$ & $.628E$$-3$\\
$5/2$ & $C_h$ & 0.211 & 1.615 & 1.6 & $4/3$ & $.406E$$-4$  & $.461E$$-6$           & $.480E$$-3$ & $.121E$$-5$\\
      $ $ & $L^2_h$ & 0.305 & 2.007 & 2 & $5/3$ & $.734E$$-5$  & $.281E$$-7$       & $.103E$$-3$ & $.999E$$-6$\\
\toprule 
      $ $ & $\mathcal{E}_h$ & 1.49 & 1.975 & 2 & $5/3$ & $.422E$$-4$  & $.448E$$-6$& $.766E$$-3$ & $.169E$$-4$\\
$7/2$ & $C_h$ & 0.377 & 2.403 & 2.4 & 2 & $.111E$$-5$  & $.440E$$-8$               & $.993E$$-4$ & $.102E$$-5$\\
      $ $ & $L^2_h$ & 0.435 & 2.798 & 2.8 & 2 & $.160E$$-6$  & $.260E$$-9$         & $.391E$$-4$ & $.394E$$-6$\\
\toprule 
      $ $ & $\mathcal{E}_h$ & 3.23 & 2.787 & 2.8 & 2 & $.125E$$-5$ & $.261E$$-7$   & $.408E$$-3$ & $.256E$$-4$\\
$9/2$ & $C_h$           & 1.17 & 3.205 & 3.2 & 2 & $.492E$$-7$ & $.579E$$-9$       & $.144E$$-3$ & $.906E$$-5$\\
      $ $ & $L^2_h$         & 1.21 & 3.601 & 3.6 & 2 & $.628E$$-8$ & $.427E$$-10$  & $.858E$$-4$ & $.536E$$-5$\\
\toprule 
     $ $ & $\mathcal{E}_h$ & 11.2 & 3.597 & 3.6 & 2 & $.593E$$-7$ & $.114E$$-8$    & $.125E$$-2$ & $.139E$$-3$\\
$11/2$   & $C_h$ & 8.02 & 3.997 & 4   & 2 & $.508E$$-8$ & $.631E$$-10$             & $.502E$$-3$ & $.558E$$-4$\\
       $ $ & $L^2_h$       & 3.77 & 3.966 & 4   & 2 & $.285E$$-8$ & $.370E$$-10$   & $.310E$$-3$ & $.347E$$-4$\\
\toprule 
\end{tabular}
\end{table}

\par The main observation is the nice agreement between $\gamma_{pr}$ and $\gamma_{th}$ for all three norms in all Examples $E_{\alpha}$, thus the sensitive dependence of $\gamma_{pr}$ on the data smoothness order $\alpha$ becomes quite clear.
This agreement is mainly better for the first and second norms \eqref{3norms} (similarly to \cite{ZKMMA2018}).
Notice that $\gamma_{th}^{(2)}/\gamma_{pr}$ and the error $\|r_{200}\|$ in each norm decrease rapidly as $\alpha$ grows.
Clearly the errors  $\|r_{N}\|$ are much smaller than $\|r_{N}^{(2)}\|$ for $N=200$ and $N_{\max}$ especially as $\alpha$ grows.
This demonstrates the essential advantages of the 4th approximation order scheme over the 2nd order one in the important case of non-smooth data as well.
This is essential, in particular, in some optimal control problems~ \cite{TVZ18}.

\par We also remind the explicit scheme \eqref{explicit1}-\eqref{explicit2}.
For the same $X$ and $a$ but $h_t=h/a$ and $T=Mh_t>1$, for example, the $C_h$-norm of the error equals $0.311E$$-14$ even for $N=20$ and $M=10$ already in Example $E_{3/2}$; thus clearly it is caused purely by the round-off errors.

\smallskip\par\textbf{\ref{numerexperiments}.2.} Also we analyze numerically
scheme \eqref{num3eq nonuni in x} and \eqref{num3ic nonuni} (with $f_N^0=s_Nf^0+\frac23(f^{\tau/2}-f^0)$)
on non-uniform spatial meshes such that
$x_k=\varphi(\frac{k}{N})-\frac{X}{2}$, $0\leq k\leq N$, and $h_k=x_k-x_{k-1}$.
Here $\varphi\in C[0,1]$ is a given increasing node distribution function with the range $\varphi([0,1])=[0,X]$.
We take again $X=T=1$ and $a=\frac{1}{\sqrt{5}}$ but consider only the smooth (analytic) exact solution $u$ for the data
\begin{gather*}
 u_0(x)=\sin(2\pi(x+0.5)),\ u_1(x)=4\sin(3\pi(x+0.5)),\ f(x,t)=e^{x+0.5-t},
\\[1mm]
 g_0(t)=\tfrac{1}{2a}\big(\tfrac{1}{a+1}e^{at}+\tfrac{1}{a-1}e^{-at}-\tfrac{2a}{a^2-1}e^{-t}\big)\ (a\neq 1),\ \
 g_1(t)=eg_0(t).
\end{gather*}

\par We base on the practical stability condition $h_t^2\frac{a^2}{h_{\min}^2}\leq\half$ with $h_{\min}=\min_{1\leq k\leq N}h_k$ (cp. \eqref{condhth2} for $C_0=1$ and $\ve_0^2=\half$), thus we set $M=M_0:=\big\lfloor\frac{\sqrt{2}aT}{h_{\min}}\big\rfloor$, where $\lfloor b\rfloor$ is the maximal integer less or equal $b$.
It turns out to be accurate in practice, see below.
We take $N=50,100,\ldots,1000$.

In Table \ref{table:practical_error num}, the error behavior in the $C_h$ norm is represented for several functions $\varphi_l$, $0\leq l\leq 6$.
Clearly $\varphi_0(t)=t$ sets the uniform mesh and is included for comparison only.
Notice that $\varphi_3'(0)=0$ whereas $\varphi_l'(+0)=+\infty$, $l=4,5,6$; both cases are more complicated than the standard one $0<\underline{c}\leq\varphi_l'(\xi)\leq \bar{c}$ on $[0,1]$ , $l=1,2$, in the existing theory \cite{Z15}.

\par The error orders $\gamma_{pr}$ are close to 4 for $0\leq l\leq 3$ but decrease down to $2.411$ as in $\varphi_l(\xi)=\xi^{a_l}$ the power $a_l=\frac34,\frac58,\frac12$ diminishes, $l=4,5,6$.
Thus the approximation orders 3 or 4, see Section \ref{nonunif_mesh}, are not always the practical error orders as well.
For $l=2,6$, the values of $c_0$ and $\gamma_{pr}$ are marked by $^*$ meaning that the results are yet too rough for $N=50,100,150$ and thus ignored in their computation.
For any $l$, the graphs of $\log_{10}\|r_N\|_{C_h}$ versus $\log_{10}N$ are very close to straight lines (omitted for brevity).

\par The mesh data $\frac{h_{\max}}{h_{\min}},
\rho_{\min}:=\min_{1\leq k\leq N-1}\frac{h_{k+1}}{h_k},
\rho_{\max}:=\max_{1\leq k\leq N-1}\frac{h_{k+1}}{h_k}$ and $\frac{M}{N}$, all for $N=800$ only, are also included into the table.
Note that condition \eqref{step ratio cond} is violated for $l=3,5,6$, but this does not essentially affect the results.
For $l=1$, $\rho_{\min}=\rho_{\max}$ since the steps $h_k$ form a geometric progression.
Also  $\varphi_l$ is strictly convex (or concave) on $[0,1]$ for $l=1,3$ (or $l=2,4,5,6$), accordingly $h_k=\varphi_l'(\xi_k)$, where
$\xi_k\in(\frac{k-1}{N},\frac{k}{N})$,
increases and $\rho_{\min}>1$ (or decreases and $\rho_{\max}<1$) as $k$ grows.
The ratios $\frac{M}{N}$ are not high except $l=1,3$.

\par Taking smaller $M$ by replacing $\sqrt2$ with $\frac{1}{\sqrt2}$ in the above formula,
for $l=0,1$ (the cases of the uniform and non-uniform meshes), leads us to highly unstable computations for $N\geq 100$: the $C_h$-norm of numerical solutions grows exponentially.
\begin{table}[t]
\caption{Numerical results for non-uniform spatial meshes, with $\varphi_2(\xi)=\frac{\ln (60\xi+1)}{\ln 61}$}
\label{table:practical_error num}
\vskip 2mm
\tabcolsep=0.14cm
\begin{tabular}{cccccccccc}
\toprule 
 $l$
 & $\varphi_l(\xi)$ & $c_0$ & $\gamma_{pr}$
 & $\|r_{200}\|_{C_h}$
 & $\|r_{400}\|_{C_h}$
 & $\|r_{800}\|_{C_h}$
 & $\frac{h_{\max}}{h_{\min}}$
 & $[\rho_{\min},\rho_{\max}]$
  & $\frac{M}{N}$
 \\[1mm]
\toprule 
0 & $\xi$ & $42.0$ & $4.001$
& $.262E$$-7$ & $.164E$$-8$ & $.103E$$-9$  & $1$ & $[1,1]$ & $.631$\\
\toprule 
1 & $\frac{e^{5\xi}-1}{e^5-1}$ &$15878$ & $3.988$
&  $.107E$$-4$ & $.668E$$-6$ & $.418E$$-7$  & $147.5$ & $[1.006,1.006]$ & $18.6$\\[1mm]
\toprule 
2 & $\varphi_2(\xi)$ & $966736^*$ & $3.945^*$
&  $.784E$$-3$ & $.533E$$-4$ & $.341E$$-5$  & $58.78$ & $[.9325,.9988]$ & $2.64$\\
\toprule 
3 & $\xi^{3/2}$  &$161.7$ & $4.001$
&  $.101E$$-6$ & $.629E$$-8$ & $.392E$$-9$  & $42.41$ &
$[1.001,1.828]$
& $17.9$\\
\toprule 
4 & $\xi^{3/4}$  &$495.2$ & $3.600$
& $.265E$$-5$ & $.216E$$-6$ & $.173E$$-7$  & $7.090$ & $[.6818,.9997]$ & $.843$\\
\toprule 
5 & $\xi^{5/8}$  &$468.7$ & $3.019$
& $.556E$$-4$ & $.672E$$-5$ & $.792E$$-6$  & $19.62$ & $[.5422,.9995]$ & $1.01$\\ 
\toprule 
6 & $\xi^{1/2}$ &$427.0^*$ & $2.411^*$
&  $.118E$$-2$ & $.230E$$-3$ & $.427E$$-4$  & $56.55$ & $[.4142,.9994]$ & $1.26$\\
\toprule 
\end{tabular}
\end{table}

\smallskip
\par \textbf{Acknowledgements}
\par The publication was prepared within the framework of the Academic Fund Program at the
National Research University Higher School of Economics (HSE) in 2019--2020 (grant no. 19-01-021)
and by the Russian Academic Excellence Project ``5-100''
as well as by the Russian Foundation for the Basic Research, grant no.~19-01-00262.

\newpage

\bibliography{ZlotnikKireeva2020_corr_2}
\end{document}